\documentclass{amsart}

\usepackage{amsfonts, amssymb, amsmath, eucal, verbatim, amsthm, amscd, enumerate}

\usepackage{graphicx}
\usepackage{framed}
\usepackage{tikz}


\DeclareGraphicsRule{.ai}{pdf}{.ai}{}
\newtheorem{theorem}{Theorem}[section]
\newtheorem{lemma}[theorem]{Lemma}
\newtheorem{proposition}[theorem]{Proposition}

\theoremstyle{definition}

\theoremstyle{remark}
\newtheorem*{remark}{Remark}

\newtheorem*{acknowledgements}{Acknowledgements}

\newcommand{\vertiii}[1]{{\left\vert\kern-0.25ex\left\vert\kern-0.25ex\left\vert #1 
    \right\vert\kern-0.25ex\right\vert\kern-0.25ex\right\vert}}

\parindent0pt
\addtolength{\baselineskip}{0.5\baselineskip}
\addtolength{\parskip}{12pt}

\numberwithin{equation}{section}

\usepackage{setspace}

\def\1{\textbf{\rm 1}}

\begin{document}

\date{\today}
\keywords{orthonormal Strichartz inequality on torus, periodic Hartree equation}

\subjclass[2010]{35B45 (primary); 35P10, 35B65 (secondary)}
\author[Nakamura]{Shohei Nakamura}
\address[Shohei Nakamura]{Department of Mathematics and Information Sciences, Tokyo Metropolitan University,
1-1 Minami-Ohsawa, Hachioji, Tokyo, 192-0397, Japan}
\email{nakamura-shouhei@ed.tmu.ac.jp}

\title[The orthonormal Strichartz inequality on torus]{The orthonormal Strichartz inequality on torus}

\begin{abstract}
In this paper, motivated by recent important works due to Frank-Lewin-Lieb-Seiringer \cite{FLLS} and Frank-Sabin \cite{frank-sabin-1}, we study the Strichartz inequality on torus with the orthonormal system input and obtain sharp estimates in certain sense. 
An application of the inequality shows the well-posedness to the periodic Hartree equation describing the infinitely many quantum particles with the power type interaction.    
\end{abstract}

\maketitle

\section{Introduction and Main results}

The classical Strichartz inequality for the free Schr\"{o}dinger propagator $e^{it\Delta}$ may be stated that for any space dimension $d\geq1$ and any admissible pair $p,q\in[1,\infty]$, namely $\frac 2p +\frac dq =d$ and $(p,q,d)\neq (1,\infty,2)$, 
\[ \big\| | e^{it\Delta} f |^2\big \|_{L^p_tL^q_x(\mathbb{R}^{d+1})} \lesssim 1\]
holds as long as $\|f\|_{L^2(\mathbb{R}^d)}=1$ where the notation $\lesssim$ denotes the inequality with some implicit constant, for example, $A\lesssim B$ means an inequality $A\leq CB$ holds for some constant $C>0$.
Such inequality is first observed by Strichartz in \cite{Strichartz} and later extended to mixed norm setting and applied for nonlinear Schr\"{o}dinger equations, for example \cite{GinibreVelo,GuoPeng,KeelTao,Tutumi,Yajima}. 
To explain the problem we address in, let us overview two topics concerning the classical Strichartz inequality, the first one is the generalization of the Strichartz inequality involving the orthonormal system and the second one is the theory for the nonlinear periodic Schr\"{o}dinger equation, especially the Strichartz inequality on torus. 
\subsection{Orthonormal Strichartz inequality on $\mathbb{R}^d$}
Recently, the classical Strichartz inequality has been generalized to the orthonormal setting by Frank-Lewin-Lieb-Seiringer \cite{FLLS} and Frank-Sabin \cite{frank-sabin-1}.  
Let us recall what the orthonormal Strichartz inequality is and their results. For the admissible pair $p,q$ and suitable $\alpha\in[1,\infty]$, we consider the inequality 
\begin{equation}\label{e:ONS-Rd}
\bigg\| \sum_j\lambda_j |e^{it\Delta}f_j|^2 \bigg\|_{L^p_tL^q_x(\mathbb{R}^{d+1})} \lesssim \|\lambda\|_{\ell^\alpha}
\end{equation}
for all $\lambda=(\lambda_j)_j\in\ell^\alpha$ and all orthonormal system $(f_j)_j$ in $L^2(\mathbb{R}^d)$. 
Clearly, the case $\alpha=1$ follows from the triangle inequality and the classical Strichartz inequality without any making use of the orthonormal hypothesis. So, in view of the inclusion relation of $\ell^\alpha$ space, the problem we are interested in is to find the largest $\alpha=\alpha(p,q)$ for which the inequality \eqref{e:ONS-Rd} holds given the admissible pair $p,q$. 
It is convenient to introduce some notations to overview the known results, see Figure \ref{f:points}: 
\[ A= \big(\frac{d-1}{d+1},\frac{d}{d+1}\big),\quad B=(1,0),\quad C=\big(\frac{d-2}{d},1\big). \]
When $d=1$, $A=C=(0,\frac12)$. 
For two points $X,Y\in [0,1]^2$, we use a notation $(X,Y)$ to represent the open line combining $X,Y$. Similarly, we define $[X,Y]$, $(X,Y]$ and $[X,Y)$.  

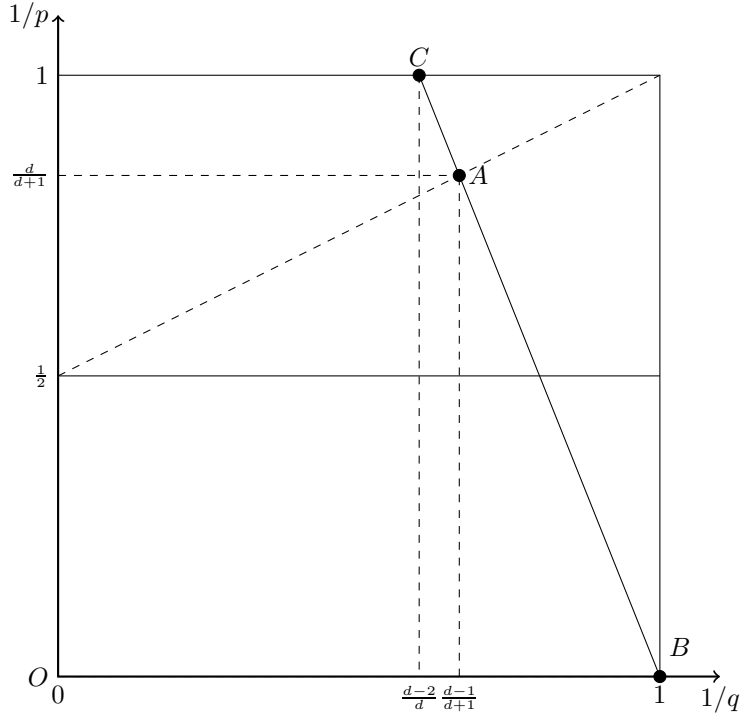
\begin{figure}
\begin{center}

\begin{tikzpicture}[scale=4]

\draw [<->,thick] (0,2.2) node (yaxis) [left] {$1/p$}
|- (2.2,0) node (xaxis) [below] {$1/q$};
\draw (0, 0) rectangle (2, 2);

\node [left] at (0,0) {$O$};
\node [above] at (1.2,2) {$C$};

\draw (4/3,5/3)--(2,0);

\coordinate (C) at (1.2,2);
\filldraw[fill=black] (C) circle[radius=0.2mm];

\coordinate (B) at (2,0);
\filldraw[fill=black] (B) circle[radius=0.2mm];

\coordinate (Frank-Sabin) at (4/3,5/3);
\filldraw[fill=black] (Frank-Sabin) circle[radius=0.2mm];

\node [right]at (4/3,5/3) {$A$};
\node [right] at (2,.1) {$B$};

\draw [dashed] (0,1)--(2,2);

\draw [dashed] (1.2,2)--(1.2,0); 
\node[below] at (1.2,0) {\tiny $\frac{d-2}{d}$};

\node [left]at (0,2) {1};
\node [left]at (0,1) {\tiny $\frac12$};
\node[below] at (0,0) {0};
\node[below] at (2,0) {1};

\draw (0,1)--(1,1)--(2,1);

\draw  (4/3,5/3)--(1.2,2);

\draw [dashed] (4/3,0)--(4/3,5/3);
\node [below] at (4/3,0) {\tiny $\frac{d-1}{d+1}$};

\draw [dashed] (0,5/3)--(4/3,5/3);
\node [left] at (0,5/3) {\tiny $\frac{d}{d+1}$};

\draw [dashed] (0,0)--(2,0);

\end{tikzpicture}

\end{center}
\caption{The points $A$ to $C$ for $d\geq3$}\label{f:points}

\end{figure}

\begin{theorem}[\cite{FLLS,frank-sabin-1}]\label{t:ONS-Rd}
Let $d\geq1$. If $(\frac1q,\frac1p) \in (A,B]$, then \eqref{e:ONS-Rd} holds for any $\lambda=(\lambda_j)_j\in\ell^{\alpha}$ and any orthonormal system $(f_j)_j$ in $L^2(\mathbb{R}^d)$ whenever $\alpha\leq\frac{2q}{q+1}$. 
Moreover, this is sharp in the sense that the inequality \eqref{e:ONS-Rd} fails if $\alpha>\frac{2q}{q+1}$. 
\end{theorem}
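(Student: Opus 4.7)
The plan is to follow the duality-to-Schatten strategy of Frank-Lewin-Lieb-Seiringer and Frank-Sabin. Setting $A=e^{it\Delta}\colon L^2_x\to L^p_tL^q_x$ and $\gamma=\sum_j\lambda_j|f_j\rangle\langle f_j|$, one has $\sum_j\lambda_j|Af_j|^2=\rho_{A\gamma A^*}$, the spatial density of the density matrix $A\gamma A^*$. By the general duality principle between density bounds and Schatten bounds on sandwiched operators, inequality \eqref{e:ONS-Rd} for all orthonormal $(f_j)$ and all $\lambda\in\ell^\alpha$ is equivalent to the Schatten estimate
\[ \| W (AA^*) \overline{W} \|_{\mathfrak{S}^{\alpha'}(L^2_{t,x})} \lesssim \|W\|_{L^{2p'}_tL^{2q'}_x}^2 \qquad \forall\, W\in L^{2p'}_tL^{2q'}_x(\mathbb{R}^{d+1}), \]
where $\alpha'$ is the H\"{o}lder conjugate of $\alpha$. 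It therefore suffices to establish this Schatten bound at the endpoint $(1/q,1/p)=A$ with $\alpha=2q/(q+1)$; interpolation (on the Schatten scale) with the trivial case at $B$, where \eqref{e:ONS-Rd} with $\alpha=1$ reduces to $L^2$-conservation plus orthonormality since $\|\sum_j\lambda_j|e^{it\Delta}f_j|^2\|_{L^\infty_tL^1_x}=\sum_j\lambda_j$ for nonnegative $\lambda_j$, then covers the entire segment $(A,B]$.

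The endpoint Schatten estimate is the core of the argument, and I would establish it by Stein's analytic interpolation on trace ideals. Embedding the operator $W(AA^*)\overline{W}$ into an analytic family $T_z$ on the strip $0\le \mathrm{Re}(z)\le 1$---for instance by writing $W=|W|\,\mathrm{sgn}(W)$ and replacing $|W|$ by $|W|^z$, or alternatively by convolving with a fractional-time kernel $|t|^{-z}$---one controls $T_z$ on the two boundary lines. On $\mathrm{Re}(z)=0$, $T_z$ is essentially $AA^*$ conjugated by unimodular multipliers and is bounded on $L^2_{t,x}$ with constants coming from the standard Keel-Tao Strichartz inequality composed with its adjoint. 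On $\mathrm{Re}(z)=1$, the dispersive estimate $\|e^{i\sigma\Delta}\|_{L^1\to L^\infty}\lesssim|\sigma|^{-d/2}$ together with the Hardy-Littlewood-Sobolev inequality yields a Hilbert-Schmidt bound for $T_z$ with the correct weight dependence. Riesz-Thorin interpolation on the Schatten scale then delivers the required $\mathfrak{S}^{\alpha'}$ estimate exactly at $A$.

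For the sharpness assertion $\alpha\le 2q/(q+1)$, I would use a Knapp-type counterexample: take a large orthonormal family $(f_j)_{j=1}^N$ whose Fourier transforms are supported in mutually disjoint thin caps on the paraboloid $\tau=|\xi|^2$. Each $|e^{it\Delta}f_j|^2$ is then essentially the indicator of a thin dual tube in space-time; taking uniform coefficients $\lambda_j=1$ and letting $N\to\infty$, one computes both sides of \eqref{e:ONS-Rd} and reads off that only $\alpha\le 2q/(q+1)$ is consistent with the obtained scaling.

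The main obstacle is the endpoint Schatten bound in the second step: the analytic family must be arranged so that the behaviour of $T_z$ on $\mathrm{Re}(z)=1$ lands the interpolation exactly at the Stein-Tomas exponent $\alpha=2q/(q+1)$ at $A$ rather than at some sub-optimal value. The remaining ingredients (the FLLS duality principle, interpolation with the trivial endpoint at $B$, and the Knapp-type sharpness example) are by now fairly standard pieces of the orthonormal Strichartz literature.
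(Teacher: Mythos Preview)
This theorem is quoted from \cite{FLLS,frank-sabin-1}; the present paper does not reprove it, although its proof of the torus analogue (Theorem~\ref{t:ONS-mix}, via Proposition~\ref{p:ONS-Td}) follows exactly the Frank--Sabin template. Your overall scheme---duality to a Schatten bound, then Stein analytic interpolation using the dispersive estimate and Hardy--Littlewood--Sobolev on one side---is the right one and matches both the original references and the paper's torus argument.

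There is, however, a genuine gap in how you organise the interpolation. You propose to first establish the Schatten bound \emph{at} the point $A$ with $\alpha=\frac{2q}{q+1}$ and then interpolate with the trivial estimate at $B$. This cannot work: the paper records in Theorem~\ref{t:ONS-Rd-beyond}(1) that \eqref{e:ONS-Rd} actually \emph{fails} at $A$ with $\alpha=\frac{2q}{q+1}$. The correct set-up (compare the proof of Proposition~\ref{p:ONS-Td}) embeds the operator in the analytic family $T^z$ with kernel $t^z K(x,t)$; on the line ${\rm Re}\,z=-1$ one gets the $\mathcal{C}^\infty$ bound, while for ${\rm Re}\,z$ in the \emph{open} range $(\tfrac{d-1}{2},\tfrac d2]$ the dispersive estimate plus HLS gives a $\mathcal{C}^2$ bound. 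Stein interpolation then yields the $\mathcal{C}^{\alpha'}$ bound directly for all $(\tfrac1q,\tfrac1p)\in(A,B]$, never touching $A$ itself. The open endpoint is forced by the strict inequality required in HLS, and this is exactly why the theorem is stated on $(A,B]$ rather than $[A,B]$.

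A second, smaller inaccuracy: on the $\mathcal{C}^\infty$ side of the interpolation you invoke ``the standard Keel--Tao Strichartz inequality composed with its adjoint''. That would be circular. In the actual argument the kernel at ${\rm Re}\,z=-1$ carries a $t^{-1}$ factor, and the $L^2\to L^2$ bound comes from the boundedness of a truncated Hilbert transform in the $t$-variable together with Plancherel in $x$ (see the treatment of $H^z_{N,\varepsilon}$ in the proof of Proposition~\ref{p:ONS-Td}, and Vega~\cite{Vega}).
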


While this theorem gives the answer to the problem on $(A,B]$, namely $\alpha=\frac{2q}{q+1}$ is the best possible, this theorem does not cover all admissible exponents and the problem on $[A,C]$ is still open regardless of recent contributions \cite{BHLNS,FLLS,frank-sabin-2}.  
As far as we are aware, the following are the best known results on $[A,C]$. 
\begin{theorem}[\cite{BHLNS, FLLS, frank-sabin-2}]\label{t:ONS-Rd-beyond}
Let $d\geq1$. 
\begin{enumerate}
\item (Critical point)
On the point $(\frac1q,\frac1p)=A$, the estimate \eqref{e:ONS-Rd} with $\alpha=\frac{2q}{q+1}=p=\frac{d+1}{d}$ fails.
\item
On the region $(\frac1q,\frac1p)\in (A,C)$, the estimate \eqref{e:ONS-Rd} holds as long as $\alpha < p$ and this is sharp up to $\varepsilon$-loss in the sense that \eqref{e:ONS-Rd} fails if $\alpha > p$. 
Moreover, the weak type estimate 
\[ \bigg\| \sum_j \lambda_j |e^{it\Delta}f_j|^2 \bigg\|_{L^{p,\infty}_tL^q_x(\mathbb{R}^{d+1})} \lesssim \|\lambda\|_{\ell^p} \]  
also holds true for any $\lambda=(\lambda_j)_j\in\ell^p$ and any orthonormal system $(f_j)_j$ in $L^2(\mathbb{R}^d)$ where $L^{p,\infty}_t$ is the weak $L^p$-space.     
\item (Keel-Tao endpoint)
On the point $(\frac1q,\frac1p)=C$,  the estimate \eqref{e:ONS-Rd} holds with $\alpha=1$ and this is sharp in the sense that \eqref{e:ONS-Rd} fails if $\alpha>1$.
\end{enumerate}
\end{theorem}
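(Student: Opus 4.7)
My plan is to treat the three parts together by combining the Frank--Sabin duality framework with Schatten-class interpolation for the positive results, and with Knapp-type wave-packet constructions for the sharpness statements. The starting point is the standard equivalence, via spectral decomposition of density operators and duality, between the orthonormal inequality \eqref{e:ONS-Rd} and a Schatten-class bound of the type $\|W e^{it\Delta}\|_{L^2_x \to \mathfrak{S}^{\beta}} \lesssim \|W\|_{L^{2p'}_t L^{2q'}_x}$, where $\beta$ is determined by $\alpha$ and $\mathfrak{S}^{\beta}$ is a suitable Schatten class over the relevant mixed-norm space. This reformulation places the whole question in the category of interpolation of Schatten-valued operators, which behaves well under both the complex method and, with extra care, the real method.

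For Part (2) positive direction, I would interpolate the Schatten bound between two endpoints: a point on $(A,B]$ arbitrarily close to $A$, where Theorem \ref{t:ONS-Rd} supplies the sharp Schatten exponent corresponding to $\alpha=2q/(q+1)$, and the Keel--Tao endpoint $C$, where the trivial trace-class bound with $\alpha=1$ is available by the classical Keel--Tao Strichartz estimate together with the triangle inequality. Complex interpolation along the segment $[A,C]$ directly yields the strong-type bound for every $\alpha<p$. To reach the critical exponent $\alpha=p$ with a weak-type loss, I would apply the Lions--Peetre real method to the same two endpoints, producing the target Lorentz space $L^{p,\infty}_t L^q_x$ on the left-hand side; the point to check is that real interpolation of Schatten classes is compatible with the mixed-norm structure in the time variable, which is the scheme worked out by Bez--Hong--Lee--Nakamura--Sawano that I would follow.

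The sharpness statements (the failure at $A$ in Part (1), the failure for $\alpha>p$ on $(A,C)$ in Part (2), and the failure for $\alpha>1$ at $C$ in Part (3)) I would prove uniformly via Knapp-type wave packets. Fix a Schwartz bump $\phi$, a small parameter $\delta$, an integer $N$, and a set of phase-space centres $(x_j,\xi_j)_{j=1}^N$ so that the packets $f_j(x)=\delta^{-d/2}\phi((x-x_j)/\delta)e^{ix\cdot\xi_j}$ are nearly orthonormal (and may be orthonormalised by Gram--Schmidt with negligible loss). Each $e^{it\Delta}f_j$ is localised to a parabolic tube, and with the centres tuned so that the tubes overlap coherently on a common space-time region, one obtains a polynomial-in-$(N,\delta)$ lower bound for $\bigl\|\sum_j|e^{it\Delta}f_j|^2\bigr\|_{L^p_t L^q_x}$ against the trivial upper bound $N^{1/\alpha}$ for the right-hand side. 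Optimising in $\delta$ and $N$ rules out $\alpha>p$ on the interior and $\alpha>1$ at $C$. The sharp failure at the corner $A$ in Part (1) requires the refined argument of Frank--Lewin--Lieb--Seiringer, in which the critical exponent $\alpha=2q/(q+1)$ itself is defeated by a logarithmic margin through a limiting superposition of Knapp packets.

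The hard part is the weak-type endpoint in Part (2). Plain complex interpolation only delivers the strong-type bound for $\alpha<p$, and upgrading this to a genuine $L^{p,\infty}_t L^q_x$ estimate at $\alpha=p$ requires real interpolation between Schatten-valued mixed-norm spaces; the commutation of real interpolation with the Schatten exponent is precisely the subtle point. I would expect the Schatten-$K$-functional bookkeeping to be where most of the technical effort is spent, and where one must be careful to avoid an $\varepsilon$-loss that would collapse the argument back to the soft complex-interpolation result.
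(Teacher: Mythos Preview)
The paper does not prove this theorem. Theorem~\ref{t:ONS-Rd-beyond} is stated purely as a background result, attributed to \cite{BHLNS, FLLS, frank-sabin-2}, and no argument for it is given anywhere in the paper. There is therefore no ``paper's own proof'' against which to compare your proposal.

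For what it is worth, your outline is broadly in line with the strategies used in the cited references: the positive results on $(A,C)$ are obtained by interpolating (complex interpolation for $\alpha<p$, and a real/Bourgain-type interpolation for the weak-type endpoint $\alpha=p$) between the Frank--Sabin range near $A$ and the Keel--Tao endpoint $C$, and the necessary conditions come from semiclassical/wave-packet constructions. Your identification of the real-interpolation step for the $L^{p,\infty}_t L^q_x$ bound as the technically delicate point is accurate; that is precisely the contribution of \cite{BHLNS}. One small attribution slip: the failure at the critical point $A$ with $\alpha=\tfrac{2q}{q+1}$ is not in \cite{FLLS} but in the later works \cite{frank-sabin-2, BHLNS}.
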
 
From this theorem, one may notice that the point $A$ plays a critical role in the sense that the sharp exponent is $\alpha = \frac{2q}{q+1}$ on the lower region and the expected sharp exponent is $\alpha=p$ on the upper region. 

Such generalization involving the orthonormal system is strongly motivated by the theory for the many body quantum mechanics and it is important to find the sharp sequence exponent $\alpha$ as in Theorem \ref{t:ONS-Rd} in this context. The first initiative work of such generalization goes back to the famous work due to Lieb-Thirring \cite{Lieb-Thirring-1} where the Gagliardo-Nirenberg-Sobolev inequality was generalized to the orthonormal inequality, so-called Lieb-Thirring's inequality. Importantly, the sharp orthonormal inequality played a crucial role to prove the stability of matter \cite{LiebBAMS,Lieb-Thirring-1}, see also \cite{sabin-2}. It is also notable that the sharp orthonormal Strichartz inequality as in Theorem \ref{t:ONS-Rd} was employed crucially to establish well-posedness and scattering theory for the certain Hartree equation in \cite{CHP-1,CHP-2,LewinSabinWP,LewinSabinScatt,sabin}. 
  
\subsection{One functional Strichartz inequality on torus}  
There is another theory regarding the classical Strichartz inequality, namely the nonlinear periodic PDE problem. In \cite{Bourgain-restrictiontori} Bourgain studied the nonlinear periodic Schr\"{o}dinger equation on torus $\mathbb{T}^d=(\mathbb{R}/\mathbb{Z})^d$ and established the well-posedness theory. One crucial feature of the equation on $\mathbb{T}^d$ is that the dispersion of the solution is weaker than the solution of the equation on $\mathbb{R}^d$ since $\mathbb{T}^d$ is compact and hence, new difficulty occurs to established the well-posedness theory. 
A decisive tool to study the nonlinear periodic Schr\"{o}dinger equation  is the Strichartz inequality on torus which can be stated as follows:
\begin{theorem}[\cite{Bourgain-restrictiontori,BourgainDemeter-decoupling}]
Let $d\geq1$ and $p_* = \frac{d+2}{d}$. Then for arbitrary small $\varepsilon>0$, there exists $C_\varepsilon>0$ such that for any $N>1$ and any $f\in L^2(\mathbb{T}^d)$ whose Fourier support is contained in $[-N,N]^d$, 
\begin{equation}\label{e:strichartz-tori}
\big\| |e^{it\Delta}f|^2 \big\|_{L^{p_*}_{x,t}(\mathbb{T}^{d+1})} \leq C_\varepsilon N^\varepsilon \|f\|_{L^2(\mathbb{T}^d)}
\end{equation}
holds. 
\end{theorem}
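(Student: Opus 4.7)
The plan is to deduce the estimate from the $\ell^2$-decoupling inequality of Bourgain-Demeter for the truncated paraboloid in $\mathbb{R}^{d+1}$. Expanding the solution
\[
e^{it\Delta}f(x)=\sum_{k\in\mathbb{Z}^d\cap[-N,N]^d}\hat f(k)\,e^{2\pi i(k\cdot x-2\pi|k|^2 t)},
\]
one sees that $|e^{it\Delta}f|^2$ is a discrete extension associated to the lattice paraboloid $\{(k,|k|^2)\}$. A parabolic rescaling $y=Nx,\ s=N^2t$ sends the frequencies to $(\xi_k,|\xi_k|^2)$ with $\xi_k=k/N\in[-1,1]^d$, separated by exactly $N^{-1}$, and turns the fundamental domain of the torus into a box of side $N$ in $y$ and $N^2$ in $s$.

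The next step is to transfer the problem to Euclidean space: smoothly truncate the trigonometric polynomial to a single fundamental cell with a Schwartz bump, producing a function on $\mathbb{R}^{d+1}$ whose Fourier support lies in the $cN^{-1}$-neighbourhoods of the points $(\xi_k,|\xi_k|^2)$ on the paraboloid. The $L^{p_*}$ norm on $\mathbb{T}^{d+1}$ is then comparable to the $L^{2p_*}$ norm of the resulting extension $Ef$ on a ball $B_R$ with $R\sim N^2$, uniformly in $N$. Applying the Bourgain-Demeter $\ell^2$-decoupling theorem at scale $R=N^2$ for the critical exponent $2p_*=\tfrac{2(d+2)}{d}$ yields
\[
\|Ef\|_{L^{2p_*}(B_R)}\le C_\varepsilon R^\varepsilon\Big(\sum_{\theta}\|Ef_\theta\|_{L^{2p_*}(w_{B_R})}^2\Big)^{1/2},
\]
where $\theta$ runs over caps of diameter $R^{-1/2}=N^{-1}$. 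Since the $\xi_k$ are separated by precisely $N^{-1}$, each cap supports essentially one Fourier coefficient, so each $Ef_\theta$ reduces to a single modulated wave whose $L^{2p_*}(w_{B_R})$ norm is $\sim |B_R|^{1/(2p_*)}|\hat f(k)|$. Squaring, summing and invoking Plancherel on $\mathbb{T}^d$ collapses the right-hand side to $|B_R|^{1/(2p_*)}\|f\|_{L^2(\mathbb{T}^d)}$; undoing the transference and the rescaling leaves the loss $R^\varepsilon=N^{2\varepsilon}$, which is absorbed into $N^\varepsilon$ after relabelling $\varepsilon$.

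The main technical obstacle is the transference step: the bump used to localise onto one fundamental cell must be chosen so that its Fourier broadening matches exactly the decoupling scale $R^{-1/2}=N^{-1}$, and one has to exploit the periodicity so that the $B_R$ on the Euclidean side, which covers many translates of the rescaled torus, does not introduce spurious polynomial factors. This procedure is by now standard in the decoupling literature, and once it is granted the entire analytic depth of the theorem is concentrated in the decoupling inequality, which I would import as a black box. For $d=1,2$ one could alternatively bypass decoupling entirely and argue via elementary divisor bounds and Bourgain's original $L^6$ estimate, providing a useful cross-check in low dimensions.
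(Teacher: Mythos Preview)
The paper does not supply its own proof of this theorem; it is quoted as background and attributed to Bourgain (for $d=1,2$, via the circle method) and Bourgain--Demeter (for $d\ge 3$, via decoupling). Your outline---rescale the discrete paraboloid to scale $N^{-1}$, transfer from $\mathbb{T}^{d+1}$ to a ball of radius $R\sim N^2$ in $\mathbb{R}^{d+1}$ by a smooth cut-off, invoke the $\ell^2$-decoupling theorem at the critical exponent $2p_*=\tfrac{2(d+2)}{d}$, and then observe that each $N^{-1}$-cap carries a single lattice point so that the decoupled pieces collapse to $\|f\|_{L^2}$---is precisely the route by which Bourgain--Demeter derive the torus Strichartz estimate as a corollary of their decoupling inequality. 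The transference step you flag is indeed the only place requiring care, and it is handled in the literature exactly as you describe (periodicity ensures that averaging over the $\sim N^{d+2}$ copies of the fundamental cell inside $B_R$ introduces no loss). So your proposal is correct and matches the argument the paper is citing; there is simply no in-paper proof to compare against beyond the historical remarks already present.
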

Remark that the $N^\varepsilon$-loss in \eqref{e:strichartz-tori} is not removable. 
Historically, in \cite{Bourgain-restrictiontori}, Bourgain proved \eqref{e:strichartz-tori} when $d=1,2$ via number theoretical  argument so-called Hardy-Littlewood circle method and conjectured that \eqref{e:strichartz-tori} holds for any $d\geq3$. 
After some improvements were obtained in \cite{BourgainDemeter-tori-1,BourgainDemeter-tori-2}, this conjecture was finally solved positively by the celebrated work due to Bourgain-Demeter \cite{BourgainDemeter-decoupling} where they employed deep theory from Harmonic analysis so-called decoupling theorem. 
Moreover, it was also observed that the inequality \eqref{e:strichartz-tori} still holds replacing the torus by more general irrational torus. For further discussion and the theory on the irrational torus including survey, see  \cite{CatoireWang,Demirbas,GuoOhWang,KenigPonceVega,Nahmod,Staffilani,Vega}. It is notable that in \cite{BurqGerardTzvetkov}, Burq-G\'{e}rard-Tzvetkov 
studied the nonlinear Schr\"{o}dinger equation on the compact manifold. In this paper, we employ their idea used to establish the Strichartz inequality on the compact manifold.  
Further improvement were obtained in their continued works \cite{BurqGerardTzvetkov-2,BurqGerardTzvetkov-3} where they employed bilinear and multilinear approach. 
For the study of the Hartree equation on compact manifold, see the work of G\'{e}rard-Pierfelice \cite{GerardPierfelice}.

\subsection{Main results}
With these two topics concerning the classical Strichartz inequality in mind, it is natural to investigate the nonlinear periodic equation in the framework of orthonormal systems. So, our main aim in this paper is to establish the sharp orthonormal Strichartz inequality on torus and apply it to the periodic Hartree equation for the density matrices of infinite trace. More precisely, our first main goal is to determine the largest $\alpha$ for which the inequality 
\begin{equation}\label{e:ONS-Td}
\bigg\| \sum_{j} \lambda_j |e^{it\Delta} P_{\leq N} f_j|^2 \bigg\|_{L_{t}^{p}L^q_x(\mathbb{T}^{d+1})} \leq C_\rho N^\rho \|\lambda\|_{\ell^\alpha}                                                                                                                                 
\end{equation}
holds for any $N>1$, any $\lambda=(\lambda_j)_j\in\ell^\alpha$ and any orthonormal system $(f_j)_j$ in $L^2(\mathbb{T}^d)$, given a parameter $\rho>0$ and admissible pair $p,q$. Here, the operator $P_{\leq N}$ denotes the frequency cut-off operator which is defined by $P_{\leq N}\phi= ( \1_{[-N,N]^d} \hat{\phi} )^{\vee}$, where $(\hat{\phi}(n))_n$ is the Fourier coefficient of $\phi$ and ${}^\vee$ is its inverse.  
When $p=q=p_*$, again applying the triangle inequality and \eqref{e:strichartz-tori}, we can prove for any small $\varepsilon$, 
\begin{equation}\label{e:ONS-one}
\bigg\| \sum_{j} \lambda_j |e^{it\Delta} P_{\leq N} f_j|^2 \bigg\|_{L_{x,t}^{p_*}(\mathbb{T}^{d+1})} \leq C_\varepsilon N^\varepsilon \|\lambda\|_{\ell^\1}.                                                                             
\end{equation}
Our first observation is that if we define $\alpha(\rho)$ for each $\rho>0$ by  
\begin{equation}\label{e:alpha}
\frac{1}{\alpha(\rho)} = 1 - \frac\rho d, 
\end{equation}
then $\alpha\leq \alpha(\rho)$ is necessary for the inequality \eqref{e:ONS-Td}, we will see this in Lemma \ref{l:necessary}  by testing the inequality \eqref{e:ONS-Td} with a simple example. 
So, in the orthonormal framework, the sharp exponent $\alpha$ for the inequality \eqref{e:ONS-Td} should be related to the power $\rho$ and more interestingly, we can easily see that $\alpha(\rho)\to1$ as $\rho\to0$. 
This reveals that the trivial estimate \eqref{e:ONS-one} is almost sharp when $\varepsilon\to0$. In other words, to make $\alpha$ strictly bigger than one, we need to lose the factor $N$ with certain power. Our first result is the following. 

\begin{theorem}\label{t:ONS-pure}
Let $d\geq1$ and $\rho\in(0,\frac{1}{p_*}]$. Then 
\begin{equation}\label{e:pure-ONS}
\bigg\| \sum_{j} \lambda_j |e^{it\Delta} P_{\leq N} f_j|^2 \bigg\|_{L_{t,x}^{p_*}(\mathbb{T}^{d+1})} \leq C_\rho N^\rho \|\lambda\|_{\ell^\alpha}                                                                   
\end{equation}
holds for any $N>1$, any $\lambda\in\ell^{\alpha}$ and any orthonormal system $(f_j)_j$ in $L^2(\mathbb{T}^d)$ whenever $\alpha < \alpha(\rho)$. Moreover, this is sharp up to $\varepsilon$-loss in the sense that \eqref{e:pure-ONS} fails if $\alpha>\alpha(\rho)$.
\end{theorem}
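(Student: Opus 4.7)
The plan is to apply the duality principle of Frank-Sabin \cite{frank-sabin-1} to recast \eqref{e:pure-ONS} as a Schatten-norm bound, and then deduce this Schatten bound via complex interpolation between two endpoint estimates. Writing $Tf(t,x) = e^{it\Delta} P_{\leq N} f(x)$, the operator $TT^{\ast}$ is a convolution on $L^{2}_{t,x}(\mathbb{T}^{d+1})$ with kernel $K_{N}(t-s, x-y)$ for $K_{N}(t,x) = \sum_{|n|_{\infty} \le N} e^{2\pi i n\cdot x - 4\pi^{2} i|n|^{2} t}$. Frank-Sabin duality identifies \eqref{e:pure-ONS} at exponent $\alpha$ with the Schatten bound
\[
\|W TT^{\ast} \overline{W}\|_{\mathfrak{S}^{\alpha'}} \le C N^{\rho} \|W\|_{L^{2p_{\ast}'}_{t,x}(\mathbb{T}^{d+1})}^{2}, \qquad \tfrac{1}{\alpha} + \tfrac{1}{\alpha'} = 1,
\]
which I would establish at $\alpha' = \infty$ and $\alpha' = 2$ and then interpolate.

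For $\alpha' = \infty$, I would invoke the $TT^{\ast}$ principle applied to Bourgain's one-function Strichartz \eqref{e:strichartz-tori} to obtain $\|WTT^{\ast}\overline{W}\|_{\mathrm{op}} \le C_{\varepsilon} N^{\varepsilon} \|W\|_{L^{2p_{\ast}'}}^{2}$. For $\alpha' = 2$ I would compute the Hilbert-Schmidt norm directly from the kernel,
\[
\|WTT^{\ast}\overline{W}\|_{\mathfrak{S}^{2}}^{2} = \int_{(\mathbb{T}^{d+1})^{2}} |W(t,x)|^{2} |K_{N}(t-s, x-y)|^{2} |W(s,y)|^{2} \, dx\,dy\,dt\,ds,
\]
and dominate by Hölder and Young's convolution inequality on $\mathbb{T}^{d+1}$ by $\|W\|_{L^{2p_{\ast}'}}^{2} \|W\|_{L^{2}}^{2} \|K_{N}\|_{L^{2p_{\ast}}(\mathbb{T}^{d+1})}^{2}$. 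Two inputs then close this endpoint: since $2p_{\ast}' \ge 2$ and $|\mathbb{T}^{d+1}| = 1$, the inclusion $L^{2p_{\ast}'} \subset L^{2}$ absorbs $\|W\|_{L^{2}}$ into $\|W\|_{L^{2p_{\ast}'}}$; meanwhile, applying \eqref{e:strichartz-tori} to the truncated Dirichlet datum $\sum_{|n|_{\infty} \le N} e^{2\pi i n\cdot x}$ (whose squared $L^{2}$-norm is $\asymp N^{d}$ and whose Schr\"{o}dinger evolution is precisely $K_{N}$) yields $\|K_{N}\|_{L^{2p_{\ast}}}^{2} \le C_{\varepsilon} N^{d+\varepsilon}$. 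Altogether this gives $\|WTT^{\ast}\overline{W}\|_{\mathfrak{S}^{2}} \le C_{\varepsilon} N^{d/2+\varepsilon} \|W\|_{L^{2p_{\ast}'}}^{2}$.

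Log-convexity of Schatten norms in the index then interpolates these two endpoints: for $\theta \in [0, 1]$,
\[
\|WTT^{\ast}\overline{W}\|_{\mathfrak{S}^{2/\theta}} \le C_{\varepsilon} N^{\theta d/2 + \varepsilon} \|W\|_{L^{2p_{\ast}'}}^{2}.
\]
Taking $\theta = 2\rho/d$ (admissible for $\rho \in (0, d/2]$, which contains the stated range $(0, 1/p_{\ast}]$) and dualizing back through Frank-Sabin recovers \eqref{e:pure-ONS} at the endpoint $\alpha = \alpha(\rho) = d/(d-\rho)$ with a harmless $N^{\varepsilon}$-loss; this loss is absorbed at any strictly subcritical $\alpha < \alpha(\rho)$ by running the interpolation at a slightly smaller $\tilde\rho < \rho$ and choosing $\varepsilon < \rho - \tilde\rho$. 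The hard part will be the Hilbert-Schmidt endpoint, specifically finding the right Hölder-Young arrangement on the double torus so that it matches an application of \eqref{e:strichartz-tori} to the Dirichlet kernel; the verification that $\|A^{\ast} W A\|_{\mathfrak{S}^{\alpha'}} = \|\sqrt{|W|}\, AA^{\ast}\sqrt{|W|}\|_{\mathfrak{S}^{\alpha'}}$ (via the polar decomposition of $W$) is what makes the duality step go through rigorously for complex $W$. The sharpness direction, namely that \eqref{e:pure-ONS} fails for $\alpha > \alpha(\rho)$, is not addressed by this plan and is deferred to the separate Lemma~\ref{l:necessary}, where it is obtained by testing against an explicit orthonormal family of plane waves at frequencies $\lesssim N$.
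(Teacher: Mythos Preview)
Your argument is correct, and it takes a genuinely different route from the paper's. The paper first proves the mixed-norm Theorem~\ref{t:ONS-mix} (with the sharp exponent $\alpha=\frac{2q}{q+1}$ and loss $N^{1/p}$) by running Stein's analytic interpolation on short time intervals $|t|\lesssim N^{-1}$ where the Kenig--Ponce--Vega dispersive bound $|K_N(x,t)|\lesssim |t|^{-d/2}$ is available, and then obtains Theorem~\ref{t:ONS-pure} by complex interpolation between Theorem~\ref{t:ONS-mix} at $p=q=p_*$ and the trivial $\ell^1$ estimate \eqref{e:ONS-one}. Your approach bypasses both the dispersive estimate and Stein interpolation entirely: you use only the one-function Strichartz inequality \eqref{e:strichartz-tori}, once to get the $\mathfrak{S}^\infty$ endpoint via H\"older and $TT^*$, and once more (applied to the Dirichlet datum) to control $\|K_N\|_{L^{2p_*}}$ at the $\mathfrak{S}^2$ endpoint; the rest is log-convexity of Schatten norms for a fixed operator. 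This is more elementary and in fact yields the conclusion on the larger range $\rho\in(0,d/2]$. What you give up is the endpoint: the paper's route, passing through Theorem~\ref{t:ONS-mix}, actually attains $\alpha=\alpha(\rho)$ with no $\varepsilon$-loss at the single value $\rho=1/p_*$ (cf.\ the remark following Theorem~\ref{t:ONS-pure}), and it delivers the mixed-norm inequality \eqref{e:ONS-mix} needed for the well-posedness application, neither of which your argument sees.
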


Remark that the possibility of \eqref{e:pure-ONS} with the expected exponent $\alpha=\alpha(\rho)$ remains open except the case $\rho=\frac1p$. 
Theorem \ref{t:ONS-pure} is a consequence of the following more general mixed norm orthonormal Strichartz inequality via the complex interpolation with \eqref{e:ONS-one}.
Note that one can easily check that $\alpha(1/p)=\frac{2q}{q+1}$ holds if $\frac2p+\frac dq=d$.  

\begin{theorem}\label{t:ONS-mix}
Let $d\geq1$ and $(\frac1q,\frac1p) \in (A,B]$. Then for any $N>1$, any $\lambda\in\ell^{\alpha}$ and any orthonormal system $(f_j)_j$ in $L^2(\mathbb{T}^d)$, 
\begin{equation}\label{e:ONS-mix}
\bigg\| \sum_{j} \lambda_j |e^{it\Delta} P_{\leq N} f_j|^2 \bigg\|_{L_{t}^{p}L^q_x(\mathbb{T}^{d+1})} \leq C N^\frac1p \|\lambda\|_{\ell^{\alpha}}                                                                                                                                 
\end{equation}
holds true whenever $\alpha \leq \frac{2q}{q+1}$. Moreover, this is sharp in the sense that \eqref{e:ONS-mix} fails if $\alpha > \frac{2q}{q+1}$.
\end{theorem}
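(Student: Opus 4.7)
The plan is a time-localization argument in the spirit of Burq-G\'{e}rard-Tzvetkov combined with the Euclidean orthonormal Strichartz inequality (Theorem \ref{t:ONS-Rd}). I would partition the time interval $[0,1]$ into $\sim N$ sub-intervals $I_k$ of length $N^{-1}$ and observe that
\begin{equation*}
\bigg\| \sum_j \lambda_j |e^{it\Delta} P_{\leq N} f_j|^2 \bigg\|_{L^p_t L^q_x(\mathbb{T}^{d+1})}^p = \sum_k \bigg\| \sum_j \lambda_j |e^{it\Delta} P_{\leq N} f_j|^2 \bigg\|_{L^p_t L^q_x(I_k \times \mathbb{T}^d)}^p.
\end{equation*}
The goal then reduces to the $N$-uniform local estimate
\begin{equation*}
\bigg\| \sum_j \lambda_j |e^{it\Delta} P_{\leq N} f_j|^2 \bigg\|_{L^p_t L^q_x(I_k \times \mathbb{T}^d)} \lesssim \|\lambda\|_{\ell^{2q/(q+1)}},
\end{equation*}
since the $\ell^p$-sum over the $\sim N$ intervals immediately produces the claimed $N^{1/p}$ loss.

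For the local estimate I would exploit that on a time scale of order $N^{-1}$ the Schr\"{o}dinger propagator on $\mathbb{T}^d$ restricted to frequencies $\leq N$ is well-approximated by its Euclidean counterpart: information travels at speed $\sim N$ and in time $N^{-1}$ only traverses a bounded distance, so a single fundamental domain, enlarged by a bounded collar, captures essentially all of the propagation. Concretely one lifts each $f_j$ to $\mathbb{R}^d$ by multiplying a periodic extension by a smooth cutoff and replaces $e^{it\Delta_{\mathbb{T}^d}} P_{\leq N}$ by $e^{it\Delta_{\mathbb{R}^d}}$ modulo a negligible error. The delicate point is that individual lifts generally destroy orthonormality, so I would phrase the step at the level of the density operator $\gamma = \sum_j \lambda_j |f_j\rangle\langle f_j|$, whose Schatten-$\alpha$ norm is basis-independent and equals $\|\lambda\|_{\ell^\alpha}$; the lift then only perturbs $\|\gamma\|_{\mathfrak{S}^\alpha}$ by a bounded factor. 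Applying Theorem \ref{t:ONS-Rd} with the sharp Euclidean exponent $\alpha = 2q/(q+1)$, valid precisely on $(A, B]$, to the lifted density yields the local bound.

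Sharpness follows from testing on the Fourier family $(e^{2\pi i n \cdot x})_{|n| \leq N}$, an orthonormal set in $L^2(\mathbb{T}^d)$ of cardinality $\sim N^d$. With $\lambda_n \equiv 1$, one has $|e^{it\Delta} e^{2\pi i n \cdot x}|^2 = 1$, so the left-hand side of \eqref{e:ONS-mix} equals $\|N^d\|_{L^p_t L^q_x} \sim N^d$, while the right-hand side is $\sim N^{1/p} N^{d/\alpha}$. Equating the powers and invoking the admissibility relation $2/p + d/q = d$ gives exactly $\alpha = dp/(dp-1) = 2q/(q+1)$, so the inequality fails whenever $\alpha$ exceeds this threshold.

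The main obstacle I anticipate is the orthonormality-preserving transfer between torus and Euclidean space: individually cutting off and extending each $f_j$ does not yield an orthonormal family on $\mathbb{R}^d$, and trying to decompose $\gamma$ in a new basis after the lift is wasteful. The cleanest path, which I would follow, is to keep the entire local argument at the operator/Schatten level, so that both the spatial cutoff and the parametrix error act boundedly on $\mathfrak{S}^{\alpha}$; this is what allows the sharpness of Theorem \ref{t:ONS-Rd} on $(A,B]$ to be inherited without loss.
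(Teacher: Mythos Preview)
Your time-decomposition step and the sharpness test match the paper exactly. The difference is in how the local estimate on each interval of length $N^{-1}$ is obtained.

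You propose to lift to $\mathbb{R}^d$ (periodic extension times a cutoff, working at the Schatten level to sidestep loss of orthonormality) and then invoke Theorem~\ref{t:ONS-Rd} as a black box. The paper does \emph{not} lift to $\mathbb{R}^d$ at all. Instead it observes that the short-time torus kernel $K_N(x,t)=\sum_{n\in S_{d,N}}e^{2\pi i(x\cdot n+t|n|^2)}$ already satisfies the Euclidean-type dispersive bound $|K_N(x,t)|\lesssim |t|^{-d/2}$ for $|t|\le N^{-1}$ (Kenig--Ponce--Vega), and then reruns the Frank--Sabin duality $+$ Stein analytic interpolation argument directly on $\mathbb{T}^d\times I_N$ to obtain the Schatten bound $\|W_1\1_{I_N}\mathcal{E}_N\mathcal{E}_N^*\1_{I_N}W_2\|_{\mathcal{C}^\alpha}\lesssim \|W_1\|_{L^\beta_tL^\alpha_x}\|W_2\|_{L^\beta_tL^\alpha_x}$. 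Translation invariance in $t$ moves $I_N$ to an arbitrary interval of length $N^{-1}$ (a time shift becomes a unimodular multiplier on Fourier coefficients, preserving orthonormality), and then your summation step finishes.

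What each approach buys: the paper's route is shorter and entirely self-contained on the torus; it never has to control a parametrix remainder. Your route would reuse Theorem~\ref{t:ONS-Rd} off the shelf, but the step you flag as the ``main obstacle'' --- making the torus/Euclidean comparison precise at the density level, not just for one function --- is real work that you have only sketched. The Schatten-level lift $\tilde\gamma=\iota\gamma\iota^*$ controls the \emph{input} norm, but you also need $\rho_{e^{it\Delta_{\mathbb{R}^d}}\tilde\gamma e^{-it\Delta_{\mathbb{R}^d}}}$ to dominate (or approximate) $\rho_{e^{it\Delta_{\mathbb{T}^d}}\gamma e^{-it\Delta_{\mathbb{T}^d}}}$ on the fundamental domain for $|t|\lesssim N^{-1}$, and this comparison of \emph{outputs} is where the finite-propagation heuristic must be made quantitative. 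It can be done, but the paper's direct dispersive argument bypasses the issue completely.
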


Recall that the exponent $\alpha(1/p)=\frac{2q}{q+1}$ has already appeared in Theorem \ref{t:ONS-Rd} as the sharp exponent for the orthonormal Strichartz inequality on $\mathbb{R}^d$.  Furthermore, the range $(A,B]$ also corresponds to the range of Theorem \ref{t:ONS-Rd}. So,  we may find some connections between the orthonormal Strichartz inequality on $\mathbb{R}^d$ and the one on $\mathbb{T}^d$ with the case  $\rho=\frac1p$. 
It is natural to ask further what happens in the region $[A,C]$. In view of the similarity between the $\mathbb{R}^d$ case and the $\mathbb{T}^d$ case with $\rho=\frac1p$ and Theorem \ref{t:ONS-Rd-beyond}, one may expect some different phenomena on $[A,C]$. Especially, recall that at the point $(\frac1q,\frac1p)=A$, the inequality on $\mathbb{R}^d$:
\[
\bigg\| \sum_{j} \lambda_j |e^{it\Delta} f_j|^2 \bigg\|_{L^p_tL^q_x(\mathbb{R}^{d+1})} \lesssim \|\lambda\|_{\ell^{\frac{2q}{q+1}}} 
\]  
fails. 
In spite of such similarity and the failure, we interestingly have a positive result at the point  $A$ for $\mathbb{T}^d$ case at least when $d=1$.
Recall that when $d=1$, exponents are $A=C=(0,\frac12)$ and $\alpha(1/p)=\frac{2q}{q+1}=2$.

\begin{theorem}\label{t:endpoint-1d}
Let $(\frac1q,\frac1p)=A=(0,\frac12)$. Then for any $N>1$, $\lambda\in\ell^\alpha$ and any orthonormal system $(f_j)_j$ in $L^2(\mathbb{T})$, 
\begin{equation}\label{e:ONS-d1}
\bigg\| \sum_j \lambda_j |e^{it\Delta}P_{\leq N}f_j|^2  \bigg\|_{L^2_tL^\infty_x(\mathbb{T}^{1+1})} \leq C N^{\frac12} \|\lambda\|_{\ell^{\alpha}}  
\end{equation}
holds true whenever $\alpha\leq 2$. Moreover, this is sharp in the sense that \eqref{e:ONS-d1} fails if $\alpha>2$. 
\end{theorem}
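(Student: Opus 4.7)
Since $\ell^\alpha\subset\ell^2$ for $\alpha\leq 2$, it suffices to establish \eqref{e:ONS-d1} at $\alpha=2$. The plan is to bypass the Frank--Sabin Schatten-duality machinery and to compute
\[
\rho(t,x):=\sum_j \lambda_j\,|e^{it\Delta}P_{\leq N}f_j(x)|^2
\]
directly in Fourier variables. Writing $\hat{f}_j(n)$ for the Fourier coefficients and expanding each summand, one obtains
\[
\rho(t,x)=\sum_{|n|,|m|\leq N} K(n,m)\,e^{i(n-m)x-i(n^2-m^2)t},\qquad K(n,m):=\sum_j \lambda_j\,\hat{f}_j(n)\overline{\hat{f}_j(m)}.
\]
The matrix $K$ is the Fourier kernel of the finite-rank density operator $\widetilde{\gamma}:=P_{\leq N}\bigl(\sum_j \lambda_j |f_j\rangle\langle f_j|\bigr)P_{\leq N}$. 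Orthonormality of $(f_j)$ together with contractivity of $P_{\leq N}$ gives $\|\widetilde{\gamma}\|_{\mathfrak{S}^2}\leq \|\lambda\|_{\ell^2}$, and $\widetilde{\gamma}$ has rank at most $2N+1$.

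I then split $\rho=\rho_0+\rho'$ according to whether $n=m$ or $n\neq m$. The diagonal piece $\rho_0=\sum_{|n|\leq N}K(n,n)=\operatorname{Tr}(\widetilde{\gamma})$ is independent of $(t,x)$, and Cauchy--Schwarz on the at most $2N+1$ singular values of $\widetilde{\gamma}$ yields
\[
|\rho_0|\leq \|\widetilde{\gamma}\|_{\mathfrak{S}^1}\leq (2N+1)^{1/2}\|\widetilde{\gamma}\|_{\mathfrak{S}^2}\lesssim N^{1/2}\|\lambda\|_{\ell^2}.
\]
For $\rho'$ the crucial one-dimensional observation is that the map $(n,m)\mapsto(n-m,\,n^2-m^2)$ is injective on $\{n\neq m\}$: since $n^2-m^2=(n-m)(n+m)$, a nonzero $n-m$ together with $n^2-m^2$ recovers $n+m$, hence $n$ and $m$. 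Distinct pairs $(n,m)$ therefore contribute to distinct space-time Fourier modes of $\mathbb{T}^{1+1}$, so Parseval gives
\[
\|\rho'\|_{L^2_{t,x}}^2\lesssim \sum_{n\neq m}|K(n,m)|^2\leq \|\widetilde{\gamma}\|_{\mathfrak{S}^2}^2\leq \|\lambda\|_{\ell^2}^2.
\]
Since $\rho'(t,\cdot)$ has $x$-frequency support in $[-2N,2N]$, Bernstein's inequality gives $\|\rho'(t,\cdot)\|_{L^\infty_x}\lesssim N^{1/2}\|\rho'(t,\cdot)\|_{L^2_x}$; integrating in $t$ produces $\|\rho'\|_{L^2_tL^\infty_x}\lesssim N^{1/2}\|\lambda\|_{\ell^2}$. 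Summing the two pieces closes the positive direction.

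Sharpness follows from the natural test functions $f_j(x)=e^{ijx}/\sqrt{|\mathbb{T}|}$ for $j=1,\ldots,N$ and $\lambda_j\equiv 1$: each $|e^{it\Delta}f_j|^2$ is constant, so $\rho\sim N$ while $\|\lambda\|_{\ell^\alpha}=N^{1/\alpha}$, which forces $1/\alpha\geq 1/2$ as $N\to\infty$. I do not expect a serious obstacle: the argument is exact Fourier bookkeeping, and the only two substantive ingredients---the rank bound for $\widetilde{\gamma}$ and the injectivity of $(n,m)\mapsto(n-m,n^2-m^2)$---are genuinely one-dimensional. The latter is precisely the level-set structure underlying the lossless $L^4_{t,x}$ Strichartz estimate on $\mathbb{T}^{1+1}$, and it is this low-dimensional coincidence that allows the orthonormal $L^2_tL^\infty_x$ endpoint to survive at the critical exponent $\alpha=2$.
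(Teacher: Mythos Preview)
Your proof is correct, and the sharpness example is identical to the paper's Lemma~\ref{l:necessary}. Both arguments rest on the same one-dimensional arithmetic fact---the injectivity of $(n,m)\mapsto(n-m,n^2-m^2)$ on $\{n\neq m\}$---and both split into diagonal and off-diagonal pieces. The packaging, however, is genuinely different. The paper invokes the Frank--Sabin duality (Lemma~\ref{l:duality}) to reduce to the Hilbert--Schmidt estimate $\|W_1\mathcal{E}_N\mathcal{E}_N^*W_2\|_{\mathcal{C}^2}\lesssim N^{1/2}\|W_1\|_{L^4_tL^2_x}\|W_2\|_{L^4_tL^2_x}$; there the $N$-loss in the off-diagonal term appears from summing over the $\sim N$ values of $m_1=n_1-n_2$ after a Cauchy--Schwarz and a Hausdorff--Young step $\mathcal{F}_x:L^1\to\ell^\infty$. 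You stay on the primal side and instead obtain an $N$-\emph{free} bound $\|\rho'\|_{L^2_{t,x}}\leq\|\lambda\|_{\ell^2}$ directly from Parseval, then pay the $N^{1/2}$ only at the last step via Bernstein in $x$; your diagonal term uses the rank bound $\|\widetilde{\gamma}\|_{\mathfrak{S}^1}\leq(2N+1)^{1/2}\|\widetilde{\gamma}\|_{\mathfrak{S}^2}$ rather than counting. Your route is more self-contained (no duality lemma needed) and makes the role of the frequency localization explicit through Bernstein; the paper's route fits its global Schatten-space framework and yields the dual inequality \eqref{e:FS-dual} directly, which is the form needed elsewhere in the paper.
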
  

We emphasize that to prove the endpoint estimate Theorem \ref{t:endpoint-1d} we follow the spirit of the Hardy-Littlewood circle method via Frank-Sabin's $TT^*$ argument in Schatten space. This is possible since the right-hand side of \eqref{e:ONS-d1} becomes $\ell^2$ when $d=1$ and $(\frac1q,\frac1p) =A= (0,\frac12)$. We will make use of the speciality of $\ell^2$.  The problem on the region $[A,C]$ for $d\geq2$ remains open although we will give one observation in Theorem \ref{t:ONS-beyond}.  
There are some possibility to extend Theorem \ref{t:ONS-mix} to more general compact manifold as Burq-G\'{e}rard-Tzvetkov did from view the point of our proof of Theorem \ref{t:ONS-mix}.  However, we will not go to such direction here. 

As an application of the above orthonormal Strichartz inequalities, we consider $M$ couple of nonlinear periodic Hartree equations which describes the dynamics of $M$ fermions interacting via a power type potential $w_a(x)=|x|^{-a}$ for certain $0<a<d$
\begin{equation}\label{e:Hartree-0}
  \left\{
  \begin{array}{ll}
	i\partial_tu_1&=(-\Delta+w_{a}\ast\rho)u_1,  \quad u_1|_{t=0} = f_1  \\
	&\ \vdots \\ 
	i\partial_tu_M&=(-\Delta+w_{a}\ast\rho)u_M, \quad u_M|_{t=0} = f_M ,  
  \end{array} 
  \right.
\end{equation}
where $(x,t) \in \mathbb{T}^d\times\mathbb{R}$, $(f_j)_{j=1}^M$ is an orthonormal system in $L^2(\mathbb{T}^d)$ and $\rho$ is a density function defined by $\rho(x,t) = \sum_{j=1}^M |u_j(x,t)|^2 $. Remark that the solution $(u_j(t))_{j=1}^M$ continues to be an orthonormal system in $L^2(\mathbb{T}^d)$ for each $t>0$. 
Our main interest is the case $M\to\infty$ and hence, we naturally arrive at the operator valued equivalent formulation of \eqref{e:Hartree-0} as follows: 
\begin{equation}\label{e:Hartree}
  \left\{
  \begin{array}{ll}
	i\partial_t\gamma=[-\Delta+w_{a}\ast\rho_\gamma,\gamma],  \quad (x,t)\in \mathbb{T}^d \times \mathbb{R} \\
	\gamma|_{t=0}=\gamma_0.
  \end{array} 
  \right.
\end{equation}
Here $\gamma_0,\gamma=\gamma(t)$ are bounded and self-adjoint operators on $L^2(\mathbb{T}^d)$, $[A,B]$ is a commutator of two operators $A$ and $B$ and $\rho_\gamma:\mathbb{T}^d\to\mathbb{C}$ is given by $\rho_\gamma(x) = \gamma(x,x)$ where $\gamma(\cdot,\cdot)$ denotes the integral kernel of the operator $\gamma$. 
There are several context for this equation on $\mathbb{R}^d$ when $\gamma_0$ is in the trace class \cite{BovePratoFano1,BovePratoFano2,Chadam} and more importantly Lewin-Sabin \cite{LewinSabinWP,LewinSabinScatt} and Chen-Hong-Pavlovi\'{c} \cite{CHP-1,CHP-2} study the equation \eqref{e:Hartree} when $\gamma$ is not in the trace class. 
We will obtain the $\mathbb{T}^d$ counterpart of the (local) well-posedness result due to Frank-Sabin \cite[Theorem 14]{frank-sabin-1}. To state our result concerning to the equation \eqref{e:Hartree}, let us introduce more notions. 
For $\alpha\in[1,\infty)$, $\mathcal{C}^\alpha=\mathcal{C}^\alpha(L^2(\mathbb{T}^d))$ denotes the Schatten space based on $L^2(\mathbb{T}^d)$ which is the space of all compact operators $A$ on $L^2(\mathbb{T}^d)$ such that ${\rm Tr}|A|^\alpha<\infty$, where $|A|=\sqrt{A^*A}$, and its norm is defined by $\|A\|_{\mathcal{C}^\alpha}=({\rm Tr}|A|^\alpha)^\frac1\alpha$. If $\alpha=\infty$, we define $\|A\|_{\mathcal{C}^\infty} = \|A\|_{L^2\to L^2}$. Also, we use the Sobolev type Schatten space $\mathcal{C}^{\alpha,s}=\mathcal{C}^{\alpha,s}(L^2(\mathbb{T}^d))$, $s\in\mathbb{R}$, introduced in \cite{CHP-1,CHP-2} whose norm is defined by 
\[ \|\gamma\|_{\mathcal{C}^{\alpha,s}(L^2(\mathbb{T}^d))} = \| \langle D\rangle^s \gamma \langle D \rangle^s \|_{\mathcal{C}^\alpha(L^2(\mathbb{T}^d))}, \] 
where $\langle D\rangle^s$ is the inhomogeneous derivative, $\langle D\rangle^s\phi = ((1+|n|^2)^\frac s2 \hat{\phi})^\vee$. 

\begin{theorem}\label{t:wellposed}
Let $d\geq1$. Suppose $(\frac1q,\frac1p)\in (A,B)$, $\frac{1}{2p}<s$ and $0<a<\frac{3}{2p}$. 
\begin{enumerate}
\item(Local well-posedness)
For any $\gamma_0\in\mathcal{C}^{\frac{2q}{q+1},s}(L^2(\mathbb{T}^d))$, there exist \  $T=T( \|\gamma_0\|_{\mathcal{C}^{\frac{2q}{q+1},s}(L^2(\mathbb{T}^d))} ,s,a) > 0$ and $\gamma \in C^0_t([0,T]; \mathcal{C}^{\frac{2q}{q+1},s}(L^2(\mathbb{T}^d)))$ satisfying \eqref{e:Hartree} on $[0,T]\times \mathbb{T}^d$ and $\rho_\gamma\in L^p_tL^q_x([0,T]\times\mathbb{T}^d)$. 
\item(Almost global well-posedness)
For each $T>0$, we have small $R_T=R_T(a,s)>0$ such that if $\|\gamma_0\|_{\mathcal{C}^{\frac{2q}{q+1},s}(L^2(\mathbb{T}^d))} \leq R_T$, then there exists a solution $\gamma \in C^0_t([0,T]; \mathcal{C}^{\frac{2q}{q+1},s}(L^2(\mathbb{T}^d)))$ satisfying \eqref{e:Hartree} on $[0,T]\times \mathbb{T}^d$ and $\rho_\gamma\in L^p_tL^q_x([0,T]\times\mathbb{T}^d)$. 
\end{enumerate}
\end{theorem}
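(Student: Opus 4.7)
The plan is to follow the Picard iteration strategy of Lewin--Sabin \cite{LewinSabinWP} and Frank--Sabin \cite{frank-sabin-1}, with Theorem \ref{t:ONS-mix} playing the role of the $\mathbb{R}^d$ orthonormal Strichartz estimate. Writing $U(t) = e^{it\Delta}$, I would recast \eqref{e:Hartree} in Duhamel form as
\[
\Phi(\gamma)(t) := U(t)\gamma_0 U(-t) - i \int_0^t U(t-s)\bigl[\, w_a\ast\rho_{\gamma(s)},\, \gamma(s) \,\bigr] U(s-t)\, ds,
\]
and look for a fixed point in the resolution space
\[
X_T := \Bigl\{ \gamma \in C^0_t\bigl([0,T]; \mathcal{C}^{\frac{2q}{q+1},s}(L^2(\mathbb{T}^d))\bigr) \, : \, \rho_\gamma \in L^p_t L^q_x([0,T]\times\mathbb{T}^d) \Bigr\},
\]
endowed with the sum of the two natural norms. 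The local statement (1) will come from short-time smallness via H\"older in $t$, while the almost-global statement (2) comes from smallness of the initial data on a prescribed time interval.

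Three ingredients feed into the fixed point. First, a \emph{Sobolev--Schatten linear Strichartz estimate}: decomposing $\gamma_0$ through a Littlewood--Paley partition and applying Theorem \ref{t:ONS-mix} to each diagonal block $P_{\sim N}\gamma_0 P_{\sim N}$, the loss $N^{1/p}$ is absorbed by the gain $N^{-2s}$ coming from the weights $\langle D\rangle^{-s}$, and the assumption $s>\frac{1}{2p}$ (equivalently $2s > \frac{1}{p}$) makes the dyadic sum converge, yielding
\[
\|\rho_{U(\cdot)\gamma_0 U(-\cdot)}\|_{L^p_t L^q_x} \lesssim \|\gamma_0\|_{\mathcal{C}^{\frac{2q}{q+1},s}(L^2(\mathbb{T}^d))}.
\]
Second, a \emph{dual Strichartz estimate}: by duality from Theorem \ref{t:ONS-mix}, the inhomogeneous operator $V \mapsto \int_0^t U(t-s) V(s) U(s-t)\, ds$ sends $L^{p'}_t L^{q'}_x$ into $\mathcal{C}^{\frac{2q}{q-1}}$, and, after the same Sobolev trade-off, into $\mathcal{C}^{\frac{2q}{q-1},s}$. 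Third, a \emph{commutator/nonlinear estimate}: H\"older in Schatten classes together with the multiplier identity $\langle D\rangle^s(Vf) = V\langle D\rangle^s f + [\langle D\rangle^s,V]f$ reduces bounding $\|[\, w_a\ast\rho_\gamma, \gamma\,]\|$ in the predual of the target of the dual Strichartz to bounding $w_a\ast\rho_\gamma$ in an $L^{p'}_t W^{s,q'}_x$-type norm; the Hardy--Littlewood--Sobolev inequality on $\mathbb{T}^d$ together with the smoothing of the Riesz-type kernel $w_a$ provides exactly such a bound from $\rho_\gamma \in L^p_t L^q_x$, and the threshold $a<\frac{3}{2p}$ is precisely what aligns all the resulting exponents with the admissibility $(1/q,1/p)\in(A,B)$ and the available Sobolev weight $s>\frac{1}{2p}$.

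Assembling these building blocks produces an estimate of the form
\[
\|\Phi(\gamma)\|_{X_T} \leq C\|\gamma_0\|_{\mathcal{C}^{\frac{2q}{q+1},s}} + C T^{\delta}\|\gamma\|_{X_T}^2,
\]
together with the matching Lipschitz estimate, for some $\delta>0$ gained from H\"older in time on $[0,T]$. For (1) this gives a contraction on a small ball of $X_T$ once $T$ is chosen small depending on $\|\gamma_0\|_{\mathcal{C}^{2q/(q+1),s}}$; for (2) one fixes $T>0$ and selects $R_T>0$ so small that $C R_T + C T^\delta R_T^2 \leq R_T$ and the Lipschitz constant is $<1$, yielding the fixed point on $[0,T]$. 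The main obstacle, in my view, is the bookkeeping between the unremovable $N^{1/p}$ loss in \eqref{e:ONS-mix} and the two Sobolev weights in the Schatten--Sobolev norm: one must ensure that \emph{both} sides of the commutator $[w_a\ast\rho_\gamma,\gamma]$ land in the correct Sobolev--Schatten scale after one application of dual Strichartz, and the strict inequalities $s>\frac{1}{2p}$ and $a<\frac{3}{2p}$ are exactly what makes the resulting dyadic sums absolutely convergent; any weakening of either would require either removing the $N^\varepsilon$-loss in \eqref{e:strichartz-tori} or a more refined bilinear/multilinear analysis.
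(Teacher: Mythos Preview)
Your fixed-point strategy and your first ingredient match the paper exactly: the paper also derives a Sobolev-weighted orthonormal Strichartz estimate from Theorem \ref{t:ONS-mix} by Littlewood--Paley summation (this is \eqref{e:globalONS}, using $2s>\tfrac1p$ to sum $2^{-k(2s-1/p)}$), and then runs a contraction on the pair $(\gamma,\rho)$. Two implementation differences are worth noting. First, the paper works with $(\gamma,\rho)$ as an \emph{independent} pair in the fixed-point space (Proposition \ref{p:generalwellposed}), which sidesteps having to show $\gamma\mapsto\rho_\gamma$ is Lipschitz on $X_T$. Second, for the nonlinear step the paper does not invoke Hardy--Littlewood--Sobolev; instead it uses the Besov membership $w_a\in B^s_{q',\infty}$ (valid precisely when $a\le \tfrac{d}{q'}-s=\tfrac{2}{p}-s$, which is where the threshold $a<\tfrac{3}{2p}$ comes from), the product rule $\|fg\|_{H^r}\lesssim\|f\|_{B^{|r|+\delta}_{\infty,\infty}}\|g\|_{H^r}$, and Young's inequality to get the operator bound $\|\langle D\rangle^{\pm s}(w_a\ast\rho)\langle D\rangle^{\mp s}\|_{L^2\to L^2}\lesssim\|w_a\|_{B^{s+\delta}_{q',\infty}}\|\rho\|_{L^q_x}$.

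There is a genuine gap in your second ingredient. The dual of \eqref{e:ONS-mix} controls $V\mapsto\int U(-t)V(t)U(t)\,dt$ from $L^{p'}_tL^{q'}_x$ into $\mathcal{C}^{\frac{2q}{q-1}}$ for a \emph{scalar} potential $V$, but the Duhamel integrand $R(t)=[w_a\ast\rho_\gamma(t),\gamma(t)]$ is operator-valued, so that map does not apply; and even if it did, landing in $\mathcal{C}^{\frac{2q}{q-1},s}$ is the wrong direction for closing an iteration in $\mathcal{C}^{\frac{2q}{q+1},s}$. The paper treats the two norms of $X_T$ separately and with different tools: the $C^0_t\mathcal{C}^{\frac{2q}{q+1},s}$ bound on the Duhamel term uses \emph{no Strichartz at all}, only unitary invariance $\|U(t)AU(-t)\|_{\mathcal{C}^{\alpha,s}}=\|A\|_{\mathcal{C}^{\alpha,s}}$, Minkowski in $t$, and H\"older in Schatten classes combined with the operator bound above (this is \eqref{e:contract-1}); the $L^p_tL^q_x$ bound on the density uses an \emph{inhomogeneous density estimate} (Proposition \ref{p:inhomONS}(2), equation \eqref{e:inhom}), which controls $\|\rho_{\langle D\rangle^{-s}\gamma(t)\langle D\rangle^{-s}}\|_{L^p_tL^q_x}$ by $\bigl\|\int U(-s)|R(s)|U(s)\,ds\bigr\|_{\mathcal{C}^{\frac{2q}{q+1}}}$ and is deduced from the dual by a trace/H\"older argument rather than being the dual itself. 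Once you replace your ingredient (2) by these two mechanisms, the rest of your outline goes through as written.
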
 

Note that if $d=3$ and $(\frac1q,\frac1p)\in(A,B)$ is sufficiently close to $A$, we may choose $a=1$ which is the most meaningful case from view point of physical motivation in Theorem \ref{t:wellposed}. In fact, the condition $\frac32 \cdot \frac{d}{d+1} = \frac32\cdot\frac{3}{3+1}>1$ holds and hence $\frac{3}{2p}>1$ holds if $\frac1p$ is sufficiently close to $\frac{d}{d+1} = \frac{3}{3+1}$ which means $(\frac1q,\frac1p)$ is sufficiently close to $A$, recall $A=(\frac{d-1}{d+1},\frac{d}{d+1})$. 
So, this exhibits one importance of extending the orthonormal Strichartz inequality up to near the point $A$. To have more range of $a$, we need to establish the orthonormal Strichartz inequality on the beyond region $[A,C]$ as in Theorems \ref{t:endpoint-1d} and \ref{t:ONS-beyond}.  
Also, in such case, namely $(\frac1q,\frac1p)$ close to $A$, the gain of the Schatten exponent $\alpha=\frac{2q}{q+1}$ is close to $\frac{d+1}{d}$ which is the largest number among $\{\frac{2q}{q+1}:(\frac1q,\frac1p)\in[A,B]\}$. 

This paper is organized as follows. In Section 2, we give a few definitions and recall the duality principle. In Section 3, we prove orthonormal Strichartz inequality Theorems \ref{t:ONS-pure}, \ref{t:ONS-mix} and \ref{t:endpoint-1d}. In Section 4, we prove the well-posedness result, Theorem \ref{t:wellposed}. In Section 5, we give one observation concerning to the orthonormal Strichartz inequality on the beyond region $[A,C]$ where we will show the almost sharp inequality at $A$ even when $d\geq2$. 

\section{Preliminaries}
In this section, we provide further definitions and recall the duality principle due to Frank-Sabin \cite{frank-sabin-1}. 
For $s\in\mathbb{R}$ and $p\in[1,\infty]$, we use $B^s_{p,\infty}=B^s_{p,\infty}(\mathbb{T}^d)$ to denote the Besov space on $\mathbb{T}^d$ whose norm is defined by 
\[
\|f\|_{B^s_{p,\infty}(\mathbb{T}^d)} = \sup_{k\in \mathbb{N}\cup\{0\}} 2^{ks} \|P_k f\|_{L^p(\mathbb{T}^d)}. 
\] 
Here, $P_k$ is the frequency cutoff operator, $P_k\phi(x)=(\varphi_k\hat{\phi})^\vee$ for $k\in \mathbb{N}\cup\{0\}$ where $\{\varphi_k\}_{k=0}^\infty$ is the partition of unity, namely $\varphi_k$ is a smooth function whose support is contained in $\{|\xi|\sim 2^k\}$ when $k\geq1$ and $\varphi_0$ is a smooth function whose support is contained in $\{|\xi|\leq2\}$ such that $\sum_{k=0}^\infty \varphi_k = 1$. See \cite{Triebel} for the details of this function space. 
It is notable that for $a\in(0,d)$, $w_a(x)=|x|^{-a} \in B^s_{p,\infty}(\mathbb{T}^d)$ if and only if $a \leq \frac dp - s$ holds. We will use this to show Theorem \ref{t:wellposed} in Section 4. In the sequel, we sometimes abbreviate $\mathbb{T}^d$ and use $L^2$ instead of $L^2(\mathbb{T}^d)$ for example.  
It is reasonable to reformulate the inequality \eqref{e:ONS-Td} in terms of the Fourier extension operator. 
Let us introduce the notation $S_{d,N}=\mathbb{Z}^d\cap [-N,N]^d$ and define the Fourier extension operator $\mathcal{E}_N$ by 
\[
\mathcal{E}_N a(x,t)=\sum_{n\in S_{d,N}}a_ne^{2\pi i(x\cdot n+t|n|^2)},\quad (x,t)\in\mathbb{T}^{d+1},
\]
for $a=(a_n)_n\in\ell^2$. 
Then its dual operator $\mathcal{E}_N^*$ (Fourier restriction operator) is given by 
\[
\mathcal{E}_N^*F(n)=
	\int_{\mathbb{T}^{d+1}}F(x,t)e^{-2\pi i(x\cdot n+t|n|^2)}\, \mathrm{d}x\mathrm{d}t  
\]
if $n\in S_{d,N}$ and $\mathcal{E}_N^*F(n)=0$ if $n\notin S_{d,N}$. 
Here, the dual operator of $\mathcal{E}_N$ means that for any $a\in\ell^2$ and any $F\in L^2(\mathbb{T}^{d+1})$,  
\[
\langle \mathcal{E}_Na,F\rangle_{L^2_{x,t}(\mathbb{T}^{d+1})}=\langle a,\mathcal{E}_N^*F\rangle_{\ell^2_n}
\]
holds. Also, it is notable that from few calculations the operator $\mathcal{E}_N\mathcal{E}_N^*$ is given by 
\begin{align*}
\mathcal{E}_N\mathcal{E}_N^*F(x,t)
&=
\int_{\mathbb{T}}e^{i(t-t')\Delta}[F(\cdot,t')](x)\, \mathrm{d}t'\\
&=
\int_{\mathbb{T}}\sum_{n\in S_{d,N}}\widehat{F(\cdot,t')}(n)e^{2\pi i(x\cdot n+(t-t')|n|^2)}\, \mathrm{d}t',
\end{align*}
and hence if we write 
\[
K_N(x,t)=\sum_{n\in S_{d,N}}e^{2\pi i(x\cdot n+t|n|^2)},
\]
then we have 
\begin{equation}\label{e:convolution}
\mathcal{E}_N\mathcal{E}_N^*F(x,t)
=K_N\ast F(x,t)=
\int_{\mathbb{T}^{d+1}}K_N(x-x',t-t')F(x',t')\, \mathrm{d}x'\mathrm{d}t'.
\end{equation}
Using these notations, the inequality \eqref{e:ONS-Td} can be reformulated as follows. The inequality \eqref{e:ONS-Td} holds for any $N>1$, any $\lambda\in\ell^\alpha$ and any orthonormal system $(f_j)_j$ in $L^2(\mathbb{T}^d)$ if and only if 
\begin{equation}\label{e:ONS-extension}
\bigg\| \sum_j\lambda_j|\mathcal{E}_N a_j|^2 \bigg\|_{L^p_tL^q_x(\mathbb{T}^{d+1})} \leq C_{\rho} N^\rho \|\lambda\|_{\ell^\alpha} 
\end{equation} 
holds for any $N>1$, $\lambda\in \ell^\alpha$ and any orthonormal system $(a_j)_j$ in $\ell^2$. 
This is because if we let $a_j = \hat{f_j}$, then the orthonormality of $(f_j)_j$ in $L^2(\mathbb{T}^d)$ is equivalent to the one of $(a_j)_j$ in $\ell^2$ and $e^{it\Delta}f_j = \mathcal{E}_N a_j$.  
From now on, we will mainly consider the inequality of the form \eqref{e:ONS-extension}. 

All our results concerning to the orthonormal inequality would be shown in terms of the Schatten spaces. In fact, thanks to the duality principle due to Frank-Sabin \cite{frank-sabin-1}, the orthonormal inequality we will prove can be rephrased as follows. 
\begin{lemma}[Lemma 3 in \cite{frank-sabin-1}]\label{l:duality}
The inequality \eqref{e:ONS-extension} is equivalent to 
\begin{equation}\label{e:FS-dual}
\big\| W_1\mathcal{E}_N\mathcal{E}_N^* W_2 \big\|_{\mathcal{C}^{\alpha'}(L^2(\mathbb{T}^{d+1}))} \leq C_\rho N^\rho \|W_1\|_{L^{2p'}_tL^{2q'}_x(\mathbb{T}^{d+1})} \|W_2\|_{L^{2p'}_tL^{2q'}_x(\mathbb{T}^{d+1})}
\end{equation}
for all $W_1,W_2 \in L^{2p'}_tL^{2q'}_x(\mathbb{T}^{d+1})$.
\end{lemma}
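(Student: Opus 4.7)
The plan is to run the standard Frank--Sabin $TT^*$-duality argument in Schatten spaces, specialised to the Fourier extension operator $\mathcal{E}_N$ on the torus. First, I would recast \eqref{e:ONS-extension} operator-theoretically. Given $\lambda=(\lambda_j)_j\in\ell^\alpha$ and an orthonormal system $(a_j)_j$ in $\ell^2$, the self-adjoint operator $\gamma=\sum_j\lambda_j|a_j\rangle\langle a_j|$ on $\ell^2$ satisfies $\|\gamma\|_{\mathcal{C}^\alpha(\ell^2)}=\|\lambda\|_{\ell^\alpha}$, and by the spectral theorem every self-adjoint $\gamma\in\mathcal{C}^\alpha$ arises this way. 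A direct computation identifies $\sum_j\lambda_j|\mathcal{E}_Na_j|^2(x,t)$ as the density $\rho_{\mathcal{E}_N\gamma\mathcal{E}_N^*}(x,t)$ of $\mathcal{E}_N\gamma\mathcal{E}_N^*$ viewed as an operator on $L^2(\mathbb{T}^{d+1})$. Testing against $V\in L^{p'}_tL^{q'}_x$, using $\int V\,\rho_{\mathcal{E}_N\gamma\mathcal{E}_N^*}\,\mathrm{d}x\mathrm{d}t=\mathrm{Tr}_{\ell^2}(\gamma\,\mathcal{E}_N^*M_V\mathcal{E}_N)$ (by cyclicity of the trace, with $M_V$ denoting multiplication by $V$ on $L^2(\mathbb{T}^{d+1})$), and invoking Schatten duality $(\mathcal{C}^\alpha)^*=\mathcal{C}^{\alpha'}$ on the $\gamma$-variable, the estimate \eqref{e:ONS-extension} becomes equivalent to the single-function form
\[
\|\mathcal{E}_N^*M_V\mathcal{E}_N\|_{\mathcal{C}^{\alpha'}(\ell^2)}\leq C_\rho N^\rho\|V\|_{L^{p'}_tL^{q'}_x(\mathbb{T}^{d+1})}\qquad\text{for all }V\in L^{p'}_tL^{q'}_x.
\]

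Next I would establish the equivalence of this single-function estimate with \eqref{e:FS-dual} by two short Schatten--H\"older manipulations. For the direction $\eqref{e:FS-dual}\Rightarrow\eqref{e:ONS-extension}$, factor $V=W_1W_2$ optimally with $|W_1|=|W_2|=|V|^{1/2}$, giving $\|W_i\|_{L^{2p'}_tL^{2q'}_x}=\|V\|_{L^{p'}_tL^{q'}_x}^{1/2}$. Introducing $T_i=M_{\overline{W_i}}\mathcal{E}_N:\ell^2\to L^2(\mathbb{T}^{d+1})$, one has $\mathcal{E}_N^*M_V\mathcal{E}_N=T_1^*T_2$ and
\[
\|T_1^*T_2\|_{\mathcal{C}^{\alpha'}(\ell^2)}\leq\|T_1\|_{\mathcal{C}^{2\alpha'}}\|T_2\|_{\mathcal{C}^{2\alpha'}},\qquad \|T_i\|_{\mathcal{C}^{2\alpha'}}^2=\|T_iT_i^*\|_{\mathcal{C}^{\alpha'}(L^2)}=\|M_{\overline{W_i}}\mathcal{E}_N\mathcal{E}_N^*M_{W_i}\|_{\mathcal{C}^{\alpha'}}.
\]
Applying \eqref{e:FS-dual} with the pair $(\overline{W_i},W_i)$ then closes this direction. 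For the converse, I would specialise the single-function estimate to $V=|W|^2\geq0$: then $\mathcal{E}_N^*M_{|W|^2}\mathcal{E}_N=(M_W\mathcal{E}_N)^*(M_W\mathcal{E}_N)$, which gives $\|M_W\mathcal{E}_N\|_{\mathcal{C}^{2\alpha'}(\ell^2,L^2)}\leq(C_\rho N^\rho)^{1/2}\|W\|_{L^{2p'}_tL^{2q'}_x}$. The factorisation $M_{W_1}\mathcal{E}_N\mathcal{E}_N^*M_{W_2}=(M_{W_1}\mathcal{E}_N)(M_{\overline{W_2}}\mathcal{E}_N)^*$ combined with Schatten--H\"older on these two factors then yields \eqref{e:FS-dual}.

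The whole argument reduces to bookkeeping in Schatten spaces; the only real care required is in tracking complex conjugations during the factorisation of complex-valued $V$ and in justifying the standard trace identities (cyclicity, $\|T\|_{\mathcal{C}^{2\alpha'}}^2=\|TT^*\|_{\mathcal{C}^{\alpha'}}$, and the reduction from all $\gamma\in\mathcal{C}^\alpha$ to self-adjoint ones). There is no substantive obstacle here---this is essentially a transcription of Frank--Sabin's $\mathbb{R}^d$ argument to the discrete Fourier extension operator $\mathcal{E}_N$ on $\mathbb{T}^{d+1}$.
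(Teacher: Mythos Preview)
Your proposal is correct and is precisely the Frank--Sabin duality argument. Note, however, that the paper does not supply its own proof of this lemma: it is stated as Lemma~3 of \cite{frank-sabin-1} and simply quoted, so there is no in-paper proof to compare against. What you have written is an accurate transcription of the original Frank--Sabin argument to the torus extension operator $\mathcal{E}_N$, and the minor bookkeeping points you flag (conjugations in the factorisation of $V$, the identity $\|T\|_{\mathcal{C}^{2\alpha'}}^2=\|TT^*\|_{\mathcal{C}^{\alpha'}}=\|T^*T\|_{\mathcal{C}^{\alpha'}}$ for $T:\ell^2\to L^2(\mathbb{T}^{d+1})$, and the reduction to self-adjoint $\gamma$) are exactly the points one must track.
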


\section{Proof of Theorems \ref{t:ONS-pure}, \ref{t:ONS-mix} and \ref{t:endpoint-1d}}
\subsection{The necessity of $\alpha\leq \alpha(\rho)$}
First, we prove the necessity $\alpha\leq\alpha(\rho)$ for the inequality \eqref{e:ONS-Td} by testing a simple example. 
\begin{lemma}\label{l:necessary}
Let $d\geq1$ and $p,q,\alpha\in[1,\infty]$ be arbitrary. 
Suppose \eqref{e:ONS-Td} or equivalently \eqref{e:ONS-extension} with some $\rho>0$ holds for any $N>1$, any $\lambda\in\ell^\alpha$ and any orthonormal system $(a_j)_j$ in $\ell^2$. 
Then it must be $\alpha\leq\alpha(\rho)$. 
\end{lemma}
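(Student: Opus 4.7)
The plan is to test the inequality \eqref{e:ONS-extension} against the simplest and most symmetric orthonormal system available on the torus: the characters $e_n(x) = e^{2\pi i n\cdot x}$ with $n\in S_{d,N}$, and then count how the two sides scale in $N$.

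Concretely, I would fix $N>1$, enumerate the lattice points $S_{d,N}=\mathbb{Z}^d\cap[-N,N]^d$ as $\{n_1,\dots,n_M\}$ with $M=|S_{d,N}|\asymp N^d$, and set $a_j=\delta_{n_j}\in\ell^2$. The family $(a_j)_{j=1}^M$ is then an orthonormal system in $\ell^2$, and by definition of the extension operator
\[
\mathcal{E}_N a_j(x,t)=e^{2\pi i(x\cdot n_j+t|n_j|^2)},\qquad \text{so that}\qquad |\mathcal{E}_N a_j(x,t)|^2\equiv 1.
\]
Next I would take $\lambda_j=1$ for $1\le j\le M$ and $\lambda_j=0$ otherwise, so that $\|\lambda\|_{\ell^\alpha}=M^{1/\alpha}$. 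Plugging this choice into \eqref{e:ONS-extension}, the left-hand side becomes $\|M\|_{L^p_tL^q_x(\mathbb{T}^{d+1})}=M$ (since $\mathbb{T}^{d+1}$ has unit volume), and the inequality reduces to
\[
M\le C_\rho N^\rho M^{1/\alpha},\qquad\text{i.e.,}\qquad M^{1-1/\alpha}\le C_\rho N^\rho.
\]
Since $M\asymp N^d$ for all $N>1$, this yields $N^{d(1-1/\alpha)}\lesssim N^\rho$ uniformly in $N$. Letting $N\to\infty$ forces $d(1-1/\alpha)\le\rho$, which is exactly $1/\alpha\ge 1-\rho/d=1/\alpha(\rho)$, hence $\alpha\le\alpha(\rho)$.

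There is no real obstacle here; the only minor point to be careful about is that the exponentials $e_{n_j}$ are already supported in $S_{d,N}$, so the frequency cutoff $P_{\le N}$ acts as the identity on them, making the passage between \eqref{e:ONS-Td} and \eqref{e:ONS-extension} transparent. The core mechanism is simply that $\mathbb{T}^{d+1}$ has finite volume while the number of orthonormal characters below frequency $N$ grows like $N^d$, and the inequality must survive this polynomial growth.
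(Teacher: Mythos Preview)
Your proof is correct and is essentially identical to the paper's own argument: both test \eqref{e:ONS-extension} against the orthonormal system $a_j=\1_{\{n_j\}}$ with $n_j\in S_{d,N}$ and coefficients $\lambda_j=1$, then compare the $N^d$ growth of the left-hand side with $N^{\rho+d/\alpha}$ on the right. The only difference is bookkeeping (you enumerate $S_{d,N}$ by $1,\dots,M$ while the paper indexes directly by $j\in\mathbb{Z}^d$), which is immaterial.
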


\begin{proof}
Let $a_j=\1_{\{j\}}$ for each $j\in\mathbb{Z}^d$ and $\lambda_j=\1_{S_{d,N}}(j)$. 
Notice that if $j\in S_{d,N}$, then 
\[| \mathcal{E}_N a_j(x) | = \Big| \sum_{n\in S_{d,N}} e^{2\pi i(x\cdot n + t|n|^2)} \1_{\{j\}}(n) \Big| = 1, \]
which implies 
\[ \bigg\| \sum_j \lambda_j |\mathcal{E}_N a_j|^2 \bigg\|_{ L^p_tL^q_x(\mathbb{T}^{d+1}) } = \sharp S_{d,N} \sim N^d. \]
On the other hand, the right-hand side of \eqref{e:ONS-extension} is 
\[ N^\rho \|\lambda\|_{\ell^\alpha} = N^\rho ( \sharp S_{d,N} )^\frac1\alpha \sim N^\rho N^\frac d\alpha. \]
So, applying \eqref{e:ONS-extension} reveals $ N^d \lesssim N^\rho N^\frac d\alpha$, which gives $d \leq \rho + \frac d\alpha$ as $N\to\infty$.
\end{proof}

As we mentioned in Section 1, $\alpha(\rho) = \frac{2q}{q+1}$ when $\rho=\frac1p$ and $\frac2p+\frac dq=d$. Hence, Lemma \ref{l:necessary} shows the sharpness part of Theorems \ref{t:ONS-mix} and \ref{t:endpoint-1d}.

\subsection{Proof of Theorems \ref{t:ONS-pure} and \ref{t:ONS-mix}}
Let us prove Theorem \ref{t:ONS-mix}. Once we prove Theorem \ref{t:ONS-mix}, then Theorem \ref{t:ONS-pure} follows from the complex interpolation between Theorem \ref{t:ONS-mix} and \eqref{e:ONS-one}. 
In this subsection, we use the notation $I_N=[-\frac{1}{2N},\frac{1}{2N}]$. The key point is the following dispersive estimate observed in Kenig-Ponce-Vega \cite{KenigPonceVega}. 
\begin{lemma}[(5.9) in \cite{KenigPonceVega}]\label{l:dispersive}
It holds that 
\[
\bigg|\sum_{n=-N}^Ne^{2\pi i(xn+t|n|^2)}\bigg|\leq C|t|^{-\frac12}
\]
for any $(x,t)\in  \mathbb{T} \times [-N^{-1},N^{-1}]$. 
\end{lemma}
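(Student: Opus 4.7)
The plan is to prove the dispersive estimate either via the van der Corput second-derivative test for exponential sums, or equivalently via Poisson summation followed by stationary phase. The key structural feature of the phase $\phi_{x,t}(\xi) = x\xi + t\xi^2$ is that its second derivative $\phi''(\xi) = 2t$ is constant on $[-N,N]$, so the oscillation is controlled uniformly in $\xi$. The hypothesis $|t| \leq 1/N$ is precisely what makes the discrete sum comparable to a single continuous oscillatory integral, which then gives the $|t|^{-1/2}$ factor by standard stationary phase for quadratic phases in one variable.

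I would proceed via Poisson summation. Choose $\chi \in C_c^\infty(\mathbb{R})$ with $\chi \equiv 1$ on $[-1,1]$ and $\mathrm{supp}\,\chi \subset [-2,2]$, and write the indicator $\1_{[-N,N]}$ as a difference of two smooth bumps $\chi(\cdot/N)$ at scales $N$ and $N/2$, so it suffices to bound
\[
S_N(x,t) := \sum_{n \in \mathbb{Z}} \chi(n/N) e^{2\pi i (xn + tn^2)}.
\]
By Poisson summation,
\[
S_N(x,t) = \sum_{k\in\mathbb{Z}} I_k(x,t), \qquad I_k(x,t) = \int_{\mathbb{R}} \chi(\xi/N)\, e^{2\pi i ((x-k)\xi + t\xi^2)}\, d\xi.
\]
The phase $\psi_k(\xi) = (x-k)\xi + t\xi^2$ has a unique stationary point $\xi_* = -(x-k)/(2t)$, and $\psi_k''(\xi) = 2t$. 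When $|\xi_*| \leq 4N$, i.e.\ $|x-k| \leq 8N|t| \leq 8$, the standard stationary phase estimate (or van der Corput's lemma with constant second derivative) gives
\[
|I_k(x,t)| \leq C|t|^{-1/2},
\]
uniformly in the location of $\xi_*$. Since $x \in \mathbb{T}$ and $|x-k| \leq 8$, only $O(1)$ such $k$ occur.

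For the remaining $k$, one has $|\psi_k'(\xi)| \gtrsim |x-k|$ on $\mathrm{supp}\,\chi(\cdot/N)$, and repeated integration by parts using the differential operator $L = \frac{1}{2\pi i \psi_k'(\xi)} \frac{d}{d\xi}$ yields $|I_k| \leq C_M N \langle x - k\rangle^{-M}$ for any $M$, so $\sum_{|x-k| \gg 1} |I_k| \leq C$. Combining the two regimes and using $|t| \leq 1/N \leq 1$ to absorb the bounded contribution into $C|t|^{-1/2}$ finishes the proof.

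The main obstacle is purely technical: one must ensure the stationary phase bound is uniform in $x$ (in particular when $\xi_*$ sits near the edge of $\mathrm{supp}\,\chi(\cdot/N)$), which is exactly where the constancy of $\psi''$ is crucial — it lets one invoke the second-derivative test without worrying about where the stationary point lies. Equivalently, and more directly, one can apply the discrete van der Corput inequality $\bigl|\sum_{n=a}^b e^{2\pi i f(n)}\bigr| \leq C\bigl((b-a)\sqrt{\lambda} + \lambda^{-1/2}\bigr)$ with $\lambda = 2|t|$, $b-a = 2N$, and then use $N\sqrt{|t|} = (N|t|)\cdot |t|^{-1/2} \leq |t|^{-1/2}$ to conclude.
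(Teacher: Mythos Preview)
The paper does not give its own proof of this lemma; it is simply quoted from Kenig--Ponce--Vega, so there is no argument in the paper to compare against. Your proposal is correct in substance, and the closing remark is in fact the cleanest route: the Weyl--van der Corput second-derivative bound
\[
\Bigl|\sum_{n=a}^b e^{2\pi i f(n)}\Bigr| \lesssim (b-a)\lambda^{1/2} + \lambda^{-1/2}, \qquad |f''|\asymp\lambda,
\]
applied with $f(n)=xn+tn^2$, $\lambda=2|t|$, $b-a=2N$, together with $N|t|\le 1$, gives the result in one line. This is also essentially how the estimate is obtained in \cite{KenigPonceVega}.

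Two small technical slips in the Poisson-summation sketch are worth fixing if you pursue that version. First, the sharp indicator $\1_{[-N,N]}$ is not literally a difference of two smooth bumps; the correct reduction is via Abel summation (partial summation), which lets you pass from a smooth cutoff back to the sharp one at the cost of a harmless supremum over partial sums. Second, the non-stationary bound you state, $|I_k|\le C_M N\langle x-k\rangle^{-M}$, only sums to $O(N)$, not $O(1)$. In fact each integration by parts gains a factor $\sim (N|x-k|)^{-1}$ (from $\partial_\xi\chi(\xi/N)$ contributing $N^{-1}$ and from $\psi_k''/(\psi_k')^2 = 2t/(\psi_k')^2$ being $\lesssim (N|x-k|)^{-1}$ since $|t|\le N^{-1}$), so the correct bound is $|I_k|\lesssim N(N|x-k|)^{-M}$, which does sum to $O(1)$. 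Neither issue affects the overall validity of the argument.
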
 
From Lemma \ref{l:dispersive}, we clearly have 
\begin{equation}\label{e:dispersive}
\bigg|\sum_{n\in S_{d,N}}e^{2\pi i(x\cdot n+t|n|^2)}\bigg|\leq C|t|^{-\frac d2}
\end{equation}
for any $(x,t)\in\mathbb{T}^d\times [-N^{-1},N^{-1}]$. 
Using this with Stein's analytic interpolation, we prove the following proposition. 
See Vega \cite{Vega} for the one functional counterpart. 
\begin{proposition}\label{p:ONS-Td}
Let $d\geq1$ and suppose $(\frac1q,\frac1p) \in (A,B]$. Then for any $N>1$, any $\lambda\in\ell^\frac{2q}{q+1}$ and any orthonormal system $(a_j)_j$ in $\ell^2$, 
\begin{equation}\label{e:ShortTime}
\bigg\| \sum_j \lambda_j |\mathcal{E}_Na_j|^2\bigg\|_{L^p_tL^q_x(\mathbb{T}^d\times I_N)}
\leq C\|\lambda\|_{\ell^\frac{2q}{q+1}}.
\end{equation}
\end{proposition}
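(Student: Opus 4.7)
My plan is to reformulate \eqref{e:ShortTime} as a Schatten-norm bound via duality and then prove it by Stein's complex interpolation, using the short-time dispersive estimate as the key analytic input. By Lemma~\ref{l:duality}, \eqref{e:ShortTime} is equivalent to
\begin{equation*}
\bigl\| W_1 \mathcal{E}_N \mathcal{E}_N^* W_2 \bigr\|_{\mathcal{C}^{\alpha'}(L^2(\mathbb{T}^d \times I_N))} \leq C \|W_1\|_{L^{2p'}_t L^{2q'}_x(\mathbb{T}^d\times I_N)} \|W_2\|_{L^{2p'}_t L^{2q'}_x(\mathbb{T}^d\times I_N)}
\end{equation*}
with $\alpha' = \frac{2q}{q-1}$, the integral kernel of the operator being $W_1(x,t)K_N(x-x',t-t')W_2(x',t')$ by \eqref{e:convolution}. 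This is the form I would work with, treating $(\tfrac1q,\tfrac1p)$ as varying along the segment $(A,B]$.

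I would mimic the analytic interpolation strategy of Frank-Lewin-Lieb-Seiringer \cite{FLLS} and Frank-Sabin \cite{frank-sabin-1} on $\mathbb{R}^d$. Introduce the analytic family
\begin{equation*}
T_z F(x,t) := c(z)\,W_1^{a(z)}(x,t) \int_{\mathbb{T}^d \times I_N} |N(t-t')|^{b(z)} K_N(x-x',t-t') W_2^{a(z)}(x',t') F(x',t')\,dx'dt',
\end{equation*}
where $W^{a(z)}:= |W|^{a(z)}\operatorname{sgn}(W)$ and $a(z),b(z),c(z)$ are affine functions of $z$ fixed so that $T_{z^*}$ reproduces $W_1 \mathcal{E}_N \mathcal{E}_N^* W_2$ (up to a harmless $N$-free constant) at some target $z^* \in (0,1)$ and the Schatten/Lebesgue exponents at $z^*$ match the claim. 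On $\operatorname{Re}z=0$ (the endpoint $B=(1,0)$), the operator-norm bound $\|T_z\|_{\mathcal{C}^\infty} \lesssim \|W_1\|_{L^2_tL^\infty_x}\|W_2\|_{L^2_tL^\infty_x}$ is immediate from $\|e^{it\Delta}P_{\leq N}f\|_{L^\infty_tL^2_x}=\|f\|_{L^2}$ combined with H\"older and $TT^*$. On $\operatorname{Re}z=1$ I would read off the Hilbert-Schmidt norm $\|T_z\|_{\mathcal{C}^2}$ directly from the kernel, using the Kenig-Ponce-Vega dispersive bound \eqref{e:dispersive} $|K_N(x,t)|\lesssim |t|^{-d/2}$, with $\operatorname{Re}b(z)$ chosen large enough to make the squared kernel integrable over $I_N \times I_N$. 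Stein's analytic interpolation in Schatten classes then delivers, at $z=z^*$, the desired $\mathcal{C}^{\alpha'}$ bound throughout $(\tfrac{1}{q},\tfrac{1}{p}) \in (A,B]$.

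The main obstacle is that $|K_N(x,t)|^2 \lesssim |t|^{-d}$ is not locally integrable in $t$ for $d\geq 2$, so the bare Hilbert-Schmidt endpoint of the interpolation diverges; the analytic weight $|N(t-t')|^{b(z)}$ is introduced precisely to cure this singularity along $\operatorname{Re}z=1$, and the compensating $N$-power is absorbed into the normalisation $c(z)$ so that the final constant at $z^*$ is independent of $N$, as \eqref{e:ShortTime} demands. A related subtlety is that the effective $\mathcal{C}^2$ endpoint corresponds to the virtual pair $(\tfrac1q,\tfrac1p)=(0,\tfrac d2)$, lying outside the admissibility square $[0,1]^2$ for $d\geq 3$, which is why genuinely analytic (as opposed to Riesz-Thorin) interpolation is essential. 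Restricting to the short interval $I_N$ throughout is what makes \eqref{e:dispersive} available as the input, playing the role of the pointwise Schr\"odinger kernel bound on $\mathbb{R}^d$.
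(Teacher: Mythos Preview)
Your overall architecture---dualize to a Schatten bound via Lemma~\ref{l:duality} and then run Stein's analytic interpolation between a $\mathcal{C}^\infty$ endpoint and a $\mathcal{C}^2$ endpoint, feeding in the short-time dispersive bound \eqref{e:dispersive}---is exactly the paper's strategy. The gap is in your $\mathcal{C}^\infty$ endpoint.

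By your own setup the affine weight satisfies $b(z^*)=0$ (so that $T_{z^*}$ recovers $W_1\mathcal{E}_N\mathcal{E}_N^*W_2$) and $\operatorname{Re}b(1)>\tfrac{d-1}{2}$ (so that the $\mathcal{C}^2$ kernel computation converges). Affinity then forces $\operatorname{Re}b(0)<0$, so on $\operatorname{Re}z=0$ the kernel carries a genuine singularity $|t-t'|^{\operatorname{Re}b(0)}$. The energy identity $\|e^{it\Delta}P_{\le N}f\|_{L^\infty_tL^2_x}=\|f\|_{L^2}$ (equivalently $\mathcal{E}_N\mathcal{E}_N^*:L^2\to L^2$) together with H\"older and $TT^*$ says nothing about convolution against $|t|^{\text{negative}}K_N$; that argument only bounds the \emph{unweighted} operator, which is precisely the object at $z=z^*$, not at $z=0$. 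If instead you set $b(0)=0$ to make the trivial Strichartz applicable, affinity gives $b\equiv0$ and your $\mathcal{C}^2$ endpoint collapses. Either way the interpolation does not close.

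The paper resolves this as follows. One keeps $W_1,W_2$ fixed (no $a(z)$), uses the bare weight $t^z$, truncates the kernel to $\varepsilon<|t|<N^{-1}$, and places the $\mathcal{C}^\infty$ endpoint at $\operatorname{Re}z=-1$. After Plancherel in $x$ the $L^2\to L^2$ bound reduces to the uniform $L^2$-boundedness of the truncated scalar operator
\[
G\ \longmapsto\ \int_{\varepsilon<|t'|<N^{-1}} t'^{-1+is}\,G(\cdot - t')\,\mathrm{d}t',
\]
which is a truncated Hilbert transform; its $L^2$ operator norm is bounded uniformly in $\varepsilon$ and $N$ (with at most exponential growth in $|s|$, which is harmless for Stein interpolation), and one lets $\varepsilon\to0$ at the end. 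This Hilbert-transform step is the missing ingredient in your sketch. At the $\mathcal{C}^2$ endpoint the paper does not merely ``make the squared kernel integrable'' but applies Hardy--Littlewood--Sobolev in $t$, which produces the correct mixed norms $L^u_tL^2_x$ for $W_1,W_2$ with no residual $N$-power; with the endpoints set this way no $N$-dependent normalization $c(z)$ is needed at all.
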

\begin{proof}
Thanks to the duality principle, Lemma \ref{l:duality}, to prove the desired estimate \eqref{e:ShortTime} for all $(\frac1q,\frac1p)\in(A,B]$, it suffices to to show  
\begin{equation}\label{e:dual-ShortTime}
\big\| W_1 \1_{I_N} \mathcal{E}_N \mathcal{E}_N^*[ \1_{I_N} W_2 ] \big\|_{\mathcal{C}^\alpha(L^2(\mathbb{T}^{d+1}))} \lesssim \|W_1\|_{L^\beta_tL^\alpha_x(\mathbb{T}^{d+1})} \|W_2\|_{L^\beta_tL^\alpha_x(\mathbb{T}^{d+1})}
\end{equation}
for all $\alpha,\beta\geq1$ such that $\frac2\beta + \frac d\alpha = 1$ and $0 \leq \frac1\alpha < \frac{1}{d+1}$. 
Moreover, it is enough to show \eqref{e:dual-ShortTime} on $\frac{1}{d+2}\leq \frac1\alpha <\frac{1}{d+1}$ since we trivially have \eqref{e:dual-ShortTime} when $\alpha=\infty$ from the Plancherel theorem. 

Define for $\varepsilon>0$, 
$T_{N,\varepsilon}=K_{N,\varepsilon}\ast$ where $K_{N,\varepsilon}(x,t)=\1_{\varepsilon<|t|<N^{-1}}K_N(x,t)$. 
Once we have 
\begin{equation}\label{0927-1}
\big\|W_1 \1_{I_N} T_{N,\varepsilon}[ \1_{I_N}W_2] \big\|_{\mathcal{C}^\alpha(L^2(\mathbb{T}^{d+1}))}
\leq
C \|W_1\|_{L^\beta_tL^\alpha_x(\mathbb{T}^{d+1})}\|W_2\|_{L^\beta_tL^\alpha_x(\mathbb{T}^{d+1})}
\end{equation}
for some $C$ independent of $\varepsilon$, then \eqref{e:dual-ShortTime} follows by taking $\varepsilon\to0$.
To do Stein's analytic complex interpolation, we further define for $z\in\mathbb{C}$ with ${\rm Re}z\in[-1,\frac{d}{2}]$, 
\[
K_{N,\varepsilon}^z(x,t)
=t^zK_{N,\varepsilon}(x,t) 
\]
and $T_{N,\varepsilon}^z = K^z_{N,\varepsilon}\ast$.
From \eqref{e:dispersive}, we have for $(x,t)\in \mathbb{T}^d\times I_N$ 
\[
|K_{N,\varepsilon}^z(x,t)|
\leq
C
|t|^{{\rm Re}z-\frac d2}.
\]
This involving the Hardy-Littlewood-Sobolev inequality reveals that 
\begin{align*}
&\big\| W_1 \1_{I_N} T_{N,\varepsilon}^z[ \1_{I_N} W_2] \big\|_{\mathcal{C}^2(L^2(\mathbb{T}^{d+1}))}^2\\
=&
\int_{(x,t)\in  \mathbb{T}^d \times I_N} 
\int_{(x',t')\in \mathbb{T}^d \times I_N} 
|W_1(x,t)K_{N,\varepsilon}^z(x-x',t-t')W_2(x',t')|^2\, \mathrm{d}x\mathrm{d}t\mathrm{d}x'\mathrm{d}t'\\
\leq&
C
\big\| \|W_1\|_{L^2_x(\mathbb{T}^d)}^2\big\|_{L^{\tilde{u}}_t(\mathbb{T})}
\big\| \|W_2\|_{L^2_x(\mathbb{T}^d)}^2\big\|_{L^{\tilde{u}}_t(\mathbb{T})},
\end{align*}
where 
$2{\rm Re}z-d\in(-1,0]$ and $\frac{2}{\tilde{u}}+( d-2{\rm Re}z)=2$.
If we write $2\tilde{u}=u$, then $\frac1u\in(\frac14,\frac12]$ and we have 
\[
\big\| W_1 \1_{I_N} T_{N,\varepsilon}^z[ \1_{I_N} W_2] \big\|_{\mathcal{C}^2(L^2(\mathbb{T}^{d+1}))}
\leq C
\|W_1\|_{L^u_tL^2_x(\mathbb{T}^{d+1})}
\|W_2\|_{L^u_tL^2_x(\mathbb{T}^{d+1})}, 
\]
provided $\frac1u= \frac12+\frac12({\rm Re}z-\frac d2),{\rm Re}z\in(\frac{d-1}{2},\frac d2]$.
On the other hand, we claim that for ${\rm Re}z=-1$, 
$T^{z}_{N,\varepsilon}:L^2_{x,t}(\mathbb{T}^d  \times I_N)\to L^2_{x,t}(\mathbb{T}^d \times I_N)$ holds with some constant depending only on $d$ and ${\rm Im}z$ exponentially. 
In fact, from Plancherel's theorem, we have for each $t\in\mathbb{T}$, 
\begin{align*}
\big\| T^z_{N,\varepsilon} F(\cdot,t) \big\|_{L^2_x}^2 
&= 
\sum_{m\in S_{d,N}} \bigg| \int_{\varepsilon<|t'|<N^{-1}} t'^{-1+i{\rm Im}z} e^{-2\pi i (t-t') |m|^2} \mathcal{F}_x [ F(\cdot,t-t') ](m)\, \mathrm{d}t' \bigg|^2 \\ 
&= 
\sum_{m\in S_{d,N}} \bigg| \int_{\varepsilon<|t'|<N^{-1}} t'^{-1+i{\rm Im}z} G_m(t-t')\, \mathrm{d}t' \bigg|^2, 
\end{align*}
where $G_m(s) = e^{-2\pi i s |m|^2} \mathcal{F}_x [ F(\cdot,s) ](m)$. 
So, if we further define $H_{N,\varepsilon}^z: G(t) \mapsto \int_{\varepsilon<|t'|<N^{-1}} t'^{-1+i{\rm Im}z} G(t-t')\, \mathrm{d}t'$, then 
\[
\big\| T^z_{N,\varepsilon} F \big\|_{L^2_{x,t}}^2 
=
\sum_{m\in S_{d,N}} \| H^z_{N,\varepsilon} G_m \|_{L^2_t}^2. 
\]
Therefore, once we have the bound $H^z_{N,\varepsilon}: L^2 \to L^2$ with some constant depending only on ${\rm Im}z$ exponentially, then we obtain the desired bound $T^{z}_{N,\varepsilon}:L^2_{x,t}(\mathbb{T}^d  \times I_N)\to L^2_{x,t}(\mathbb{T}^d \times I_N)$. 
Indeed,  the bound $H^z_{N,\varepsilon}: L^2 \to L^2$ holds true since the operator $H^z_{N,\varepsilon}$ is just Hilbert transform up to $i{\rm Im}z$. For further detail, see Vega \cite{Vega}. 
Hence, using $T^{z}_{N,\varepsilon}:L^2_{x,t}(\mathbb{T}^d  \times I_N)\to L^2_{x,t}(\mathbb{T}^d \times I_N)$, we obtain for ${\rm Re}z=-1$, 
\[\big\| W_1 \1_{I_N} T_{N,\varepsilon}^z[ \1_{I_N} W_2] \big\|_{\mathcal{C}^\infty(L^2(\mathbb{T}^{d+1}))}
\leq C({\rm Im}z)
\|W_1\|_{L^\infty_tL^\infty_x(\mathbb{T}^{d+1})}
\|W_2\|_{L^\infty_tL^\infty_x(\mathbb{T}^{d+1})}.\]
Applying Stein's analytic interpolation, \eqref{0927-1} holds as long as 
\[
\frac 2\beta+\frac d\alpha=1,\quad  \frac{1}{d+2}\leq \frac{1}{\alpha}<\frac{1}{d+1}.
\]
\end{proof}

Once we have \eqref{e:ShortTime}, then the same inequality replacing $I_N$ by an arbitrary interval $I$ whose length is $N^{-1}$ holds true: 
\begin{equation}\label{e:anyinterval}
\bigg\| \sum_j \lambda_j |\mathcal{E}_Na_j|^2\bigg\|_{L^p_tL^q_x(\mathbb{T}^d\times I)}
\leq C\|\lambda\|_{\ell^\frac{2q}{q+1}}
\end{equation}
where the constant $C$ is independent of $I$. 
In fact, if we denote the center of the interval $I$ by $c(I)$, then changing variables give 
\[ \bigg\| \sum_j \lambda_j |\mathcal{E}_Na_j|^2\bigg\|_{L^p_tL^q_x(\mathbb{T}^d\times I)} = \bigg\| \sum_j \lambda_j |\mathcal{E}_Nb_j|^2\bigg\|_{L^p_tL^q_x(\mathbb{T}^d\times I_N)} \]
where $b_j(n) = a_j(n)e^{-2\pi i c(I)|n|^2}$. Since $(b_j)_j$ is orthonormal in $\ell^2$ if $(a_j)_j$ is orthonormal,  \eqref{e:ShortTime} reveals the desired inequality. 
From this observation, we may prove Theorem \ref{t:ONS-mix}. 
\begin{proof}[Proof of Theorem \ref{t:ONS-mix}]
We have a covering $\mathbb{T}=\bigcup_{i=1}^NI_i$ where $\{I_i\}_{i=1}^N$ is the collection of disjoint intervals whose length is $N^{-1}$ and decompose 
\[ \bigg\| \sum_j \lambda_j |\mathcal{E}_Na_j|^2\bigg\|_{L^p_tL^q_x(\mathbb{T}^{d+1})}^p = \sum_{i=1}^N \bigg\| \sum_j \lambda_j |\mathcal{E}_Na_j|^2\bigg\|_{L^p_tL^q_x(\mathbb{T}^d\times I_i)}^p. \]
Applying \eqref{e:anyinterval}, we obtain \eqref{e:ONS-mix}. 
\end{proof}

\subsection{Proof of Theorem \ref{t:endpoint-1d}}
One notices that $\frac{2q}{q+1}=2$ holds if $(\frac1q,\frac1p)=(0,\frac12)$ and this is a key point for the proof of Theorem \ref{t:endpoint-1d}. So, the desired inequality \eqref{e:ONS-d1} is equivalent to 
\begin{equation}\label{e:critical-1d}
\bigg\| \sum_j \lambda_j |\mathcal{E}_Na_j|^2\bigg\|_{L^2_tL^\infty_x(\mathbb{T}^{2})} 
\lesssim N^\frac12 \|\lambda\|_{\ell^2}.
\end{equation}
\begin{proof}[Proof of Theorem \ref{t:endpoint-1d}]
From Lemma \ref{l:duality}, \eqref{e:critical-1d} is equivalent to 
\begin{equation}\label{e:goal-1d}
\| W_1 \mathcal{E}_N\mathcal{E}_N^* W_2 \|_{\mathcal{C}^2(L^2(\mathbb{T}^{2}))} \lesssim N^\frac12 \|W_1\|_{L^4_tL^2_x(\mathbb{T}^{2})} \|W_2\|_{L^4_tL^2_x(\mathbb{T}^{2})}.
\end{equation}
Recalling \eqref{e:convolution}, we see that the left-hand side of \eqref{e:goal-1d} turns into 
\begin{align*}
&\big\| W_1 \mathcal{E}_N\mathcal{E}_N^* W_2 \big\|_{\mathcal{C}^2(L^2(\mathbb{T}^{2}))}^2\\
= &
\int_{\mathbb{T}^{2}} \int_{\mathbb{T}^{2}} |W_1(x,t) K_N(x-x',t-t') W_2(x',t')|^2\, \mathrm{d}t\mathrm{d}x\mathrm{d}t'\mathrm{d}x'.
\end{align*}
Now, we expand $|K_N(x-x',t-t')|^2$ as follows.
\[ 
|K_N(x-x',t-t')|^2 
= 
\sum_{n_1,n_2=-N}^N e^{2\pi i [ (x-x')(n_1-n_2) + (t-t')(|n_1|^2 - |n_2|^2) ]}.
\]
If we write $|W_i|^2 = \psi_i$, then 
\begin{align*}
&\| W_1 \mathcal{E}_N\mathcal{E}_N^* W_2 \|_{\mathcal{C}^2(L^2(\mathbb{T}^{2}))}^2
=
{\rm I} + {\rm II}, 
\end{align*}
where I is the case when $n_1=n_2$:
\[ 
{\rm I} = \sum_{n=-N}^N \int_{\mathbb{T}^{2}} \int_{\mathbb{T}^{2}} 
\psi_1(x,t)  \psi_2(x',t')\, \mathrm{d}t\mathrm{d}x\mathrm{d}t'\mathrm{d}x', 
\]
and II is the case when $n_1\neq n_2$:
\[
{\rm II} = \sum_{n_1\neq n_2}
\int_{\mathbb{T}^{2}} \int_{\mathbb{T}^{2}} 
\psi_1(x,t) e^{2\pi i [ (x-x')(n_1-n_2) + (t-t')(|n_1|^2 - |n_2|^2) ]} \psi_2(x',t')\, \mathrm{d}t\mathrm{d}x\mathrm{d}t'\mathrm{d}x'. 
\]
We first handle II. Rewrite 
\begin{align*} 
&\sum_{n_1\neq n_2} e^{2\pi i [ (x-x')(n_1-n_2) + (t-t')(|n_1|^2 - |n_2|^2) ]} \\ 
=& 
\sum_{\substack{m_1=-2N,\\ m_1\neq  0}}^{2N} \sum_{m_2=-N^2}^{N^2} e^{2\pi i [ (x-x')m_1 + (t-t')m_2 ]}
\sum_{\substack{n_1\neq n_2: \\ n_1-n_2=m_1, |n_1|^2-|n_2|^2=m_2}} 1\\
=& 
\sum_{\substack{m_1=-2N,\\ m_1\neq  0}}^{2N} \sum_{m_2=-N^2}^{N^2} e^{2\pi i [ (x-x')m_1 + (t-t')m_2 ]}
\1_{S_{2,N}}\Big(2^{-1}(m_1+\frac{m_2}{m_1}), 2^{-1}(-m_1+\frac{m_2}{m_1}) \Big),
\end{align*}
since the number of $(n_1,n_2)$ satisfying the condition $n_1\neq n_2$, $n_1-n_2=m_1$ and $|n_1|^2-|n_2|^2=m_2$ for fixed $m_1\neq0,m_2$ is at most one. 
For the sake of simplicity, we write $m_2 \in M_{N}(m_1)$ if $m_2 \in [-N^2,N^2]$ and $2^{-1}(m_1+\frac{m_2}{m_1}),2^{-1}(-m_1+\frac{m_2}{m_1})\in \mathbb{Z}\cap [-N,N]$.
From this observation, 
\begin{align*}
{\rm II} 
&= 
\sum_{\substack{m_1=-2N,\\ m_1\neq  0}}^{2N} \sum_{m_2\in M_N(m_1)} 
\int_{\mathbb{T}^{2}} \int_{\mathbb{T}^{2}} 
\psi_1(x,t) e^{2\pi i [ (x-x')m_1 + (t-t')m_2 ]} \psi_2(x',t')\, \mathrm{d}t\mathrm{d}x\mathrm{d}t'\mathrm{d}x'\\
&=
\sum_{\substack{m_1=-2N,\\ m_1\neq  0}}^{2N} \sum_{m_2\in M_N(m_1)} 
\overline{\widehat{\psi_1}(m_1,m_2)} \cdot \widehat{\psi_2}(m_1,m_2) \\
&\leq 
\sum_{\substack{m_1=-2N,\\ m_1\neq  0}}^{2N} \Big( \sum_{m_2\in \mathbb{Z}} 
|\widehat{\psi_1}(m_1,m_2)|^2 \Big)^\frac12 
\Big(\sum_{m_2\in \mathbb{Z}} 
|\widehat{\psi_2}(m_1,m_2)|^2 \Big)^\frac12. 
\end{align*}
If we use the notation $\mathcal{F}_x \psi_1 (m_1,t) = \int_{\mathbb{T}} e^{-2\pi i xm_1} \psi_1(x,t)\, \mathrm{d}x$, then we clearly have $\widehat{\psi_1}(m_1,m_2) = \mathcal{F}_t[\mathcal{F}_x\psi_1 (m_1, \cdot)](m_2) $. 
Applying the Plancherel and the Hausdorff-Young which states that $\mathcal{F}_x: L^1(\mathbb{T}) \to \ell^\infty$, 
\begin{align*}  
\Big( \sum_{m_2\in \mathbb{Z}} 
|\widehat{\psi_1}(m_1,m_2)|^2 \Big)^\frac12 
&=
\Big(
\int_{\mathbb{T}} |\mathcal{F}_x\psi_1 (m_1, t)|^2\, \mathrm{d}t
\Big)^\frac12 \\ 
&\leq 
\bigg(
\int_{\mathbb{T}} \Big(\int_{\mathbb{T}}|\psi_1 (x, t)|\, \mathrm{d}x\Big)^2\, \mathrm{d}t
\bigg)^\frac12. 
\end{align*} 
Putting together with $\psi_i = |W_i|^2$, we see 
\[
{\rm II} \leq 
4N \|W_1\|_{L^4_tL^2_x(\mathbb{T}^{2})}^2 \|W_1\|_{L^4_tL^2_x(\mathbb{T}^{2})}^2.
\]
 
On the other hand, for $I$, we easily have from H\"{o}lder, 
\[
I 
\leq 2N \|W_1\|_{L^4_tL^2_x(\mathbb{T}^{2})}^2 \|W_1\|_{L^4_tL^2_x(\mathbb{T}^{2})}^2. 
\]
In total, 
\[
\big\| W_1 \mathcal{E}_N\mathcal{E}_N^* W_2 \big\|_{\mathcal{C}^2(L^2(\mathbb{T}^{2}))}^2 
\leq 6N \|W_1\|_{L^4_tL^2_x(\mathbb{T}^{2})}^2 \|W_1\|_{L^4_tL^2_x(\mathbb{T}^{2})}^2,
\]
which implies \eqref{e:goal-1d}.
\end{proof}

\section{The well-posedness of the Hartree equation \eqref{e:Hartree}}

In this section, we prove Theorem \ref{t:wellposed} applying our orthonormal Strichartz inequalities. 
We obtained the orthonormal inequality in the form of \eqref{e:ONS-Td} in the previous sections. By the same proof, it is also possible to replace $P_{\leq N}$ by $P_k$ for any $k\in\mathbb{N}\cup\{0\}$. 
For example, Theorem \ref{t:ONS-pure} can be rephrased by for any $k\in\mathbb{N}\cup\{0\}$, 
\[ \bigg\| \sum_{j}\lambda_j |e^{it\Delta}P_{k} f_j|^2 \bigg\|_{L^{p}_{t}L^q_x(\mathbb{T}^{d+1})} \leq C_\rho 2^{k\rho} \|\lambda\|_{\ell^\alpha}. \]
Keeping this in mind, we give a more general result which can be derived by assuming 
\begin{equation}\label{e:generalONS}
\bigg\| \sum_{j}\lambda_j |e^{it\Delta}P_{k} f_j|^2 \bigg\|_{L^p_tL^q_x(\mathbb{T}^{d+1})} \leq C_\rho 2^{k\rho} \|\lambda\|_{\ell^\alpha},\quad (k\in\mathbb{N}\cup\{0\}).
\end{equation}

\begin{proposition}\label{p:generalwellposed}
Suppose \eqref{e:generalONS} for some $p,q,\alpha\in[1,\infty]$ and some $\rho>0$. Let $s>\frac\rho2$ and $w\in B^s_{q',\infty}$. 
\begin{enumerate}
\item
For any $\gamma_0\in\mathcal{C}^{\alpha,s}(L^2)$ with $R:=\|\gamma_0\|_{\mathcal{C}^{\alpha,s}(L^2)}<\infty$, there exists $T=T(R, \|w\|_{B^s_{q',\infty}} ) > 0$ and $\gamma\in C^0_t([0,T];\mathcal{C}^{\alpha,s}(L^2))$ satisfying \eqref{e:Hartree} on $[0,T]\times\mathbb{T}^d$ and $\rho_\gamma\in L^p_tL^q_x([0,T]\times\mathbb{T}^d)$.
\item
For each $T>0$, we have $R_T=R_T(\| w \|_{B^s_{q',\infty}} )$ such that if $\|\gamma_0\|_{\mathcal{C}^{\alpha,s}(L^2)} \leq R_T$, then there exists a solution $\gamma \in C^0_t([0,T]; \mathcal{C}^{\alpha,s}(L^2))$ satisfying \eqref{e:Hartree} on $[0,T]\times \mathbb{T}^d$ and $\rho_\gamma\in L^p_tL^q_x([0,T]\times\mathbb{T}^d)$. 
\end{enumerate}
\end{proposition}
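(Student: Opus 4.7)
The plan is a standard Banach fixed point argument applied to the Duhamel formulation
\[
\gamma(t)=e^{it\Delta}\gamma_0 e^{-it\Delta}-i\int_0^t e^{i(t-\tau)\Delta}\bigl[w\ast\rho_{\gamma(\tau)},\gamma(\tau)\bigr]e^{-i(t-\tau)\Delta}\,d\tau,
\]
carried out in the Banach space
$X_T=\bigl\{\gamma\in C^0_t([0,T];\mathcal{C}^{\alpha,s}):\rho_\gamma\in L^p_tL^q_x([0,T]\times\mathbb{T}^d)\bigr\}$ equipped with the norm $\|\gamma\|_{X_T}=\|\gamma\|_{L^\infty_t\mathcal{C}^{\alpha,s}}+\|\rho_\gamma\|_{L^p_tL^q_x}$. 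Denoting the right-hand side by $\Phi(\gamma)$, I will show $\Phi$ maps a closed ball $B_M\subset X_T$ into itself and is a strict contraction there; then (1) follows by fixing $M\sim R=\|\gamma_0\|_{\mathcal{C}^{\alpha,s}}$ and shrinking $T$ in terms of $R$ and $\|w\|_{B^s_{q',\infty}}$, while (2) follows by fixing $T$ and shrinking $R=R_T$ (hence $M$) depending on $\|w\|_{B^s_{q',\infty}}$.

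The first key estimate is the homogeneous density bound $\|\rho_{e^{it\Delta}\gamma_0 e^{-it\Delta}}\|_{L^p_tL^q_x}\lesssim\|\gamma_0\|_{\mathcal{C}^{\alpha,s}}$. Diagonalize $\langle D\rangle^s\gamma_0\langle D\rangle^s=\sum_j\mu_j|h_j\rangle\langle h_j|$ with $(h_j)_j$ orthonormal and $\|\mu\|_{\ell^\alpha}=\|\gamma_0\|_{\mathcal{C}^{\alpha,s}}$, so that
\[
\rho_{e^{it\Delta}\gamma_0 e^{-it\Delta}}(t,x)=\sum_j\mu_j\bigl|e^{it\Delta}\langle D\rangle^{-s}h_j(x)\bigr|^2.
\]
Expanding $\langle D\rangle^{-s}h_j$ in Littlewood-Paley (with each piece $P_k\langle D\rangle^{-s}h_j$ behaving as $2^{-ks}$ times a bounded multiplier applied to $P_k h_j$), and applying Cauchy-Schwarz pointwise in $j$ followed by Cauchy-Schwarz in $L^p_tL^q_x$, reduces the bound to applying the hypothesis \eqref{e:generalONS} to the orthonormal system $(h_j)_j$ at each dyadic scale, yielding
\[
\|\rho_{e^{it\Delta}\gamma_0 e^{-it\Delta}}\|_{L^p_tL^q_x}\lesssim\Bigl(\sum_{k\geq 0}2^{-k(s-\rho/2)}\Bigr)^2\|\gamma_0\|_{\mathcal{C}^{\alpha,s}}.
\]
The geometric series converges precisely when $s>\rho/2$, explaining the stated hypothesis.

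For the nonlinear term set $V=w\ast\rho_\gamma$. Young's inequality in the Besov scale yields $\|V\|_{L^p_tB^s_{\infty,\infty}}\lesssim\|w\|_{B^s_{q',\infty}}\|\rho_\gamma\|_{L^p_tL^q_x}$, and a fractional Leibniz (paraproduct) bound gives $\|\langle D\rangle^s V\langle D\rangle^{-s}\|_{L^2\to L^2}\lesssim\|V\|_{B^s_{\infty,\infty}}$. Combining these with the unitarity of $e^{it\Delta}$ on $\mathcal{C}^{\alpha,s}$ and H\"{o}lder in time produces the Schatten-Sobolev control
\[
\Bigl\|\int_0^t e^{i(t-\tau)\Delta}[V(\tau),\gamma(\tau)]e^{-i(t-\tau)\Delta}\,d\tau\Bigr\|_{L^\infty_t\mathcal{C}^{\alpha,s}}\lesssim T^{1/p'}\|w\|_{B^s_{q',\infty}}\|\rho_\gamma\|_{L^p_tL^q_x}\|\gamma\|_{L^\infty_t\mathcal{C}^{\alpha,s}}.
\]
The matching density bound for the Duhamel integral is obtained by duality (Lemma \ref{l:duality}) combined with the Christ-Kiselev lemma, which removes the retarded time restriction and reduces matters to the $TT^*$-form of \eqref{e:generalONS}; the same dyadic Littlewood-Paley Cauchy-Schwarz argument as in the homogeneous step then closes the estimate. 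Difference bounds follow by linearity. Closing the contraction requires $T^{1/p'}\|w\|_{B^s_{q',\infty}}M\ll 1$, which yields both (1) and (2).

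I expect the main technical hurdle to be the Leibniz-type bound $\|\langle D\rangle^s V\langle D\rangle^{-s}\|_{L^2\to L^2}\lesssim\|V\|_{B^s_{\infty,\infty}}$ and its use to control $\langle D\rangle^s[V,\gamma]\langle D\rangle^s$ in $\mathcal{C}^\alpha$: the $\langle D\rangle^s$ weights must be pushed across multiplication by $V$ via Besov paraproduct bounds while preserving the Schatten structure on $\gamma$. The analytic core of the argument is the balance that forces $s>\rho/2$, coming from the square root in Cauchy-Schwarz applied to the $2^{k\rho}$-loss in \eqref{e:generalONS}, which is precisely what makes the dyadic sum over Littlewood-Paley pieces summable.
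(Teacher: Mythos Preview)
Your overall strategy---fixed point in a space combining $C^0_t\mathcal{C}^{\alpha,s}$ with $L^p_tL^q_x$ control of the density, the homogeneous density bound via Littlewood--Paley summation exploiting $s>\rho/2$, and control of $\langle D\rangle^s[V,\gamma]\langle D\rangle^s$ in $\mathcal{C}^\alpha$ through a Besov product estimate and Young's inequality---matches the paper's proof closely. Two points deserve comment.

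The invocation of Christ--Kiselev for the inhomogeneous density estimate is the one genuine gap. The map $R\mapsto\rho_{\gamma(t)}$ with $\gamma(t)=\int_0^t e^{i(t-\tau)\Delta}R(\tau)e^{-i(t-\tau)\Delta}\,d\tau$ is indeed linear in $R$, but the right-hand side you ultimately need, $\bigl\|\int e^{-i\tau\Delta}|R(\tau)|e^{i\tau\Delta}\,d\tau\bigr\|_{\mathcal{C}^\alpha}$, is not an $L^r_\tau$-norm of $R$ with values in a fixed Banach space, so the hypotheses of Christ--Kiselev are not met; this is a known obstruction in the orthonormal setting. The paper instead follows the Frank--Sabin route (Proposition~\ref{p:inhomONS}): test $\rho_{\langle D\rangle^{-s}\gamma(t)\langle D\rangle^{-s}}$ against $V\in L^{p'}_tL^{q'}_x$, rewrite as a trace, apply $|{\rm Tr}(AB)|\leq{\rm Tr}(|A||B|)$ for self-adjoint operators, and invoke the dual form \eqref{e:dual-ons} of the homogeneous estimate. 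This bypasses the retarded time restriction entirely and is what you should substitute for the Christ--Kiselev step.

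Two minor technical differences. The paper runs the contraction on pairs $(\gamma,\rho)$ rather than on $\gamma$ alone, since the density map $\gamma\mapsto\rho_\gamma$ is not a priori well-defined on all of $C^0_t\mathcal{C}^{\alpha,s}$; your formulation builds $\|\rho_\gamma\|$ into the norm and so implicitly assumes this. And the product estimate the paper uses, \eqref{eq:1012-8}, carries a $\delta$-loss $\|fg\|_{H^r}\lesssim\|f\|_{B^{|r|+\delta}_{\infty,\infty}}\|g\|_{H^r}$, which is absorbed by the strict inequality $s>\rho/2$; your stated bound $\|\langle D\rangle^sV\langle D\rangle^{-s}\|_{L^2\to L^2}\lesssim\|V\|_{B^s_{\infty,\infty}}$ is slightly sharper than what is actually quoted there.
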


Once we have Proposition \ref{p:generalwellposed}, then it suffices to combine this with Theorem \ref{t:ONS-mix} to have Theorem \ref{t:wellposed}. 
In fact, using Proposition \ref{p:generalwellposed} with $(\frac1q,\frac1p)\in(A,B)$, $\rho=\frac1p$, $w=w_a$ and $\alpha=\frac{2q}{q+1}$, we obtain Theorem \ref{t:wellposed} since the assumption of Proposition \ref{p:generalwellposed} can be ensured by Theorem \ref{t:ONS-mix} and $w_a\in B^s_{q',\infty}$ holds if $a\leq \frac{d}{q'}-s=\frac2p-s$. 
So, from now on, we prove Proposition \ref{p:generalwellposed} following the argument due to Frank-Sabin \cite[Theorem 14]{frank-sabin-1} with few twists.  
Our ingredient is the part of the control of the nonlinearity where we employ the estimate involving the Besov space $B^s_{q',\infty}$.

As a direct corollary of \eqref{e:generalONS}, we have for any $\varepsilon>0$, any $\lambda\in\ell^\alpha$ and any orthonormal system $(f_j)_j$ in $L^2$, 
\begin{equation}\label{e:globalONS}
\bigg\| \sum_{j}\lambda_j |e^{it\Delta}\langle D \rangle^{-\frac\rho2 -\varepsilon} f_j|^2 \bigg\|_{L^p_tL^q_x(\mathbb{T}^{d+1})} \leq C_{\rho,\varepsilon} \|\lambda\|_{\ell^\alpha}. 
\end{equation}

In fact, using the vector-valued version of the Littlewood-Paley theorem (for example, Lemma 1 in \cite{sabin-2}) and \eqref{e:generalONS}, we obtain 
\begin{align*}
&\bigg\|\sum_{j}\lambda_j|e^{it\Delta}\langle D\rangle^{-(\frac\rho2+\varepsilon)}f_j|^2\bigg\|_{L^p_tL^q_x(\mathbb{T}^{d+1})}\\
\lesssim &
\bigg\|\sum_{j}\lambda_j|e^{it\Delta}P_{0}f_j|^2\bigg\|_{L^p_tL^q_x(\mathbb{T}^{d+1})}
+
\bigg\|\sum_{k=1}^\infty\sum_{j}\lambda_j|2^{-k(\frac\rho2+\varepsilon)}e^{it\Delta}P_kf_j|^2\bigg\|_{L^p_tL^q_x(\mathbb{T}^{d+1})}\\
\lesssim &
\|\lambda\|_{\ell^{\alpha}}
+
\sum_{k=1}^\infty2^{-k(\rho+2\varepsilon)}
\bigg\|\sum_j\lambda_j |e^{it\Delta}P_{k}f_j|^2\bigg\|_{L^p_tL^q_x(\mathbb{T}^{d+1})}
\lesssim_\varepsilon
\|\lambda\|_{\ell^{\alpha}},
\end{align*}
as we desired. 

In the sequel, we denote $s=\frac\rho2+\varepsilon$. 
Before going to the next step, let us recall about the density function, although we do not give the complete treatment of the density function of $\gamma$ here. We refer to \cite{FLLS} for further detail. 
A concrete example of our interest is $\rho_{e^{it\Delta} \langle D \rangle^{-s}\gamma_0 \langle D \rangle^{-s} e^{it\Delta}} (x) = \sum_j \lambda_j|e^{it\Delta} \langle D \rangle^{-s} f_j(x)|^2$ where $\gamma_0=\sum_j\lambda_j |f_j\rangle\langle f_j|$, $(f_j)_j$ is the orthonormal system in $L^2(\mathbb{T}^d)$. Then the density function $\rho_{e^{it\Delta} \langle D \rangle^{-s}\gamma_0 \langle D \rangle^{-s} e^{it\Delta}} (x)$ satisfies 
\begin{equation}\label{e:density-prop}
\int_{\mathbb{T}^d} \rho_{e^{it\Delta} \langle D \rangle^{-s}\gamma_0 \langle D \rangle^{-s} e^{it\Delta}} (x) V(x)\, \mathrm{d}x = {\rm Tr}_{L^2(\mathbb{T}^d)} (\gamma_0 e^{-it\Delta}\langle D \rangle^{-s} V \langle D \rangle^{-s} e^{it\Delta})
\end{equation}
for any nice function $V:\mathbb{T}^d\to [0,\infty)$.
From the definition, it is clear that \eqref{e:globalONS} is equivalent to 
\begin{equation}\label{e:density}
\left\|\rho_{e^{it\Delta} \langle D \rangle^{-s} \gamma_0 \langle D \rangle^{-s} e^{-it\Delta}}\right\|_{L^p_tL^q_x(\mathbb{T}^{d+1})}
\leq C_{\rho,\varepsilon} \|\gamma_0\|_{\mathcal{C}^\alpha(L^2)}, \quad \gamma_0\in\mathcal{C}^\alpha(L^2).
\end{equation}

\begin{proposition}\label{p:inhomONS}
\

\begin{enumerate}
\item
The orthonormal Strichartz inequality \eqref{e:globalONS} or \eqref{e:density} is equivalent to for any $V\in L^{p'}_tL^{q'}_x(\mathbb{T}^{d+1})$, 
\begin{equation}\label{e:dual-ons}
\bigg\| \int_{\mathbb{T}}e^{-it\Delta}\langle D \rangle^{-s}V(x,t) \langle D \rangle^{-s}e^{it\Delta}\, \mathrm{d}t \bigg\|_{\mathcal{C}^{\alpha'}(L^2)}
\leq C_{\rho,\varepsilon}
\|V\|_{L^{p'}_tL^{q'}_x(\mathbb{T}^{d+1})}.
\end{equation}
\item(Inhomogeneous estimate)
Let $R(t'):L^2\to L^2$ be self-adjoint for each $t'\in\mathbb{T}$ and define 
\[
\gamma(t)=\int_0^te^{i(t-t')\Delta}R(t')e^{i(t'-t)\Delta}\, \mathrm{d}t',\quad (t\in\mathbb{T}).
\]
Suppose one of \eqref{e:globalONS}, \eqref{e:density} and \eqref{e:dual-ons} holds true. Then 
\begin{equation}\label{e:inhom}
\|\rho_{\langle D \rangle^{-s} \gamma(t) \langle D \rangle^{-s}} \|_{L^p_tL^q_x(\mathbb{T}^{d+1})}
\leq C_{\rho,\varepsilon}
\bigg\|\int_{\mathbb{T}}e^{-is\Delta}|R(s)|e^{is\Delta}\, \mathrm{d}s\bigg\|_{\mathcal{C}^\alpha(L^2)}.
\end{equation}

\end{enumerate}
\end{proposition}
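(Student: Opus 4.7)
The plan is to derive both statements from the Schatten-von Neumann trace duality $(\mathcal{C}^\alpha)^*=\mathcal{C}^{\alpha'}$, the duality of mixed Lebesgue spaces $L^p_tL^q_x$/$L^{p'}_tL^{q'}_x$, and the defining relation \eqref{e:density-prop} for the density function.

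For part (1), the first step is to express \eqref{e:density} in bilinear form: pair $\rho_{e^{it\Delta}\langle D\rangle^{-s}\gamma_0\langle D\rangle^{-s}e^{-it\Delta}}$ with a test function $V\in L^{p'}_tL^{q'}_x$. By \eqref{e:density-prop} applied at almost every $t$ together with cyclicity of the trace, this pairing equals
$$
{\rm Tr}_{L^2(\mathbb{T}^d)}\Big(\gamma_0\cdot\int_{\mathbb{T}}e^{-it\Delta}\langle D\rangle^{-s}V(\cdot,t)\langle D\rangle^{-s}e^{it\Delta}\,\mathrm{d}t\Big).
$$
The equivalence of \eqref{e:density} and \eqref{e:dual-ons} then follows from the two duality principles: taking the supremum over the unit ball of $L^{p'}_tL^{q'}_x$ (respectively of $\mathcal{C}^\alpha$) transforms each side into the other.

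For part (2), I would again dualize the target \eqref{e:inhom} against $V\in L^{p'}_tL^{q'}_x$, substitute the Duhamel expression for $\gamma(t)$, and use cyclicity together with unitarity of $e^{it\Delta}$ to rearrange the resulting trace integral into
$$
\int_{\mathbb{T}}\int_0^t {\rm Tr}\Big(e^{-it'\Delta}R(t')e^{it'\Delta}\cdot e^{-it\Delta}\langle D\rangle^{-s}V(\cdot,t)\langle D\rangle^{-s}e^{it\Delta}\Big)\,\mathrm{d}t'\,\mathrm{d}t.
$$
Were the time-ordering $\mathbf{1}_{t'<t}$ absent, Fubini would factor this into ${\rm Tr}(M_R N_V)$ with $M_R=\int_{\mathbb{T}} e^{-it'\Delta}R(t')e^{it'\Delta}\,\mathrm{d}t'$ and $N_V=\int_{\mathbb{T}} e^{-it\Delta}\langle D\rangle^{-s}V(\cdot,t)\langle D\rangle^{-s}e^{it\Delta}\,\mathrm{d}t$, and Schatten duality combined with \eqref{e:dual-ons} would close the argument.

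The main obstacle is therefore to reinstate the causal truncation and simultaneously to promote $R(t')$ to $|R(t')|$ in the final bound. To switch from signed to absolute $R$, I decompose $R(t')=R_+(t')-R_-(t')$ into positive and negative parts; both $M_{R_\pm}=\int_{\mathbb{T}} e^{-it'\Delta}R_\pm(t')e^{it'\Delta}\,\mathrm{d}t'$ are positive operators dominated (in the operator sense) by $\int_{\mathbb{T}} e^{-it'\Delta}|R(t')|e^{it'\Delta}\,\mathrm{d}t'$, so monotonicity of the Schatten norm on positive operators yields the desired bound up to a factor of two. To handle the time-ordering, I appeal to the Christ-Kiselev lemma, which transfers the factored non-causal bilinear bound to its time-ordered counterpart with a constant depending only on the relevant mixed-norm exponents. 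Combining these two ingredients with the dual estimate \eqref{e:dual-ons} completes the derivation of \eqref{e:inhom}.
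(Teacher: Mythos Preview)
Your treatment of part~(1) is correct and essentially identical to the paper's argument.

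For part~(2), the invocation of the Christ--Kiselev lemma is a genuine gap. That lemma applies to integral operators $L^a_t(X)\to L^b_t(Y)$ with $a<b$, but the ``input norm'' on $R$ you need is $\big\|\int_{\mathbb{T}} e^{-it'\Delta}|R(t')|e^{it'\Delta}\,\mathrm{d}t'\big\|_{\mathcal{C}^\alpha}$, which is not of the form $\|R\|_{L^a_{t'}(X)}$ for any Banach space $X$. One could force an $L^1_{t'}(\mathcal C^\alpha)$ input via the triangle inequality $\|M_{|R|}\|_{\mathcal C^\alpha}\le\int_{\mathbb T}\|R(t')\|_{\mathcal C^\alpha}\,\mathrm d t'$, and then Christ--Kiselev would indeed apply (since $1<p$), but this yields only the weaker bound $\int_{\mathbb T}\|R(t')\|_{\mathcal C^\alpha}\,\mathrm d t'$ on the right-hand side of \eqref{e:inhom}, not the stated one.

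The paper disposes of both the time-ordering and the passage from $R$ to $|R|$ in a single stroke, without Christ--Kiselev. Since one may take the dualizing function $V\ge0$, the operator $B(t):=e^{-it\Delta}\langle D\rangle^{-s}V(t)\langle D\rangle^{-s}e^{it\Delta}$ is positive. Applying the pointwise inequality $|{\rm Tr}(AB)|\le{\rm Tr}(|A|\,B)$ for self-adjoint $A$ and positive $B$, with $A=e^{-it'\Delta}R(t')e^{it'\Delta}$ (so that $|A|=e^{-it'\Delta}|R(t')|e^{it'\Delta}$), renders the integrand nonnegative; the domain $\{t'<t\}$ can then be enlarged to the full square $\mathbb T\times\mathbb T$ at no cost, Fubini factors the double integral into ${\rm Tr}(M_{|R|}\,N_V)$, and Schatten--H\"older together with \eqref{e:dual-ons} finishes. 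Your $R_\pm$ decomposition already contains the essential positivity idea; the point is that once both factors are positive, the causal truncation evaporates by monotonicity, making Christ--Kiselev unnecessary.
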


\begin{proof}
Since the proof of this proposition is almost the same as in \cite{FLLS,frank-sabin-1}, we omit details and give key steps. 
To show \eqref{e:dual-ons}, in view of the duality, we have only to show 
\begin{equation}\label{e:1214-1}
\bigg|{\rm Tr}_{L^2}\bigg( \gamma_0 \int_{\mathbb{T}}e^{-it\Delta}\langle D \rangle^{-s}V(x,t) \langle D \rangle^{-s}e^{it\Delta}\, \mathrm{d}t \bigg)\bigg| \lesssim \|V\|_{L^{p'}_tL^{q'}_x(\mathbb{T}^{d+1})}
\end{equation}
for any $\gamma_0: \|\gamma_0 \|_{\mathcal{C}^\alpha(L^2)}=1$ which follows from the combination of $\eqref{e:density-prop}$ and \eqref{e:density}.

To show \eqref{e:inhom}, we notice from the duality and the property of the density function that for some non-negative function $V=V(x,t)$ such that $\|V\|_{L^{p'}_tL^{q'}_x(\mathbb{T}^{d+1})} = 1$, 
\begin{align*}
&\|\rho_{\langle D \rangle^{-s} \gamma(t) \langle D \rangle^{-s}} \|_{L^p_tL^q_x(\mathbb{T}^{d+1})}
= 
\int_{\mathbb{T}} {\rm Tr}_{L^2} ( \gamma(t) \langle D\rangle^{-s} V(t) \langle D\rangle^{-s} )\, \mathrm{d}t\\ 
\leq& 
\bigg\| \int_{\mathbb{T}} e^{-it\Delta} \langle D\rangle^{-s} V(t) \langle D\rangle^{-s} e^{it\Delta}\, \mathrm{d}t \bigg\|_{\mathcal{C}^{\alpha'}(L^2)} 
\bigg\| \int_{\mathbb{T}} e^{-it'\Delta} |R(t')| e^{it'\Delta}\, \mathrm{d}t' \bigg\|_{\mathcal{C}^{\alpha}(L^2)}, 
\end{align*}
where we used the fact that $|{\rm Tr}_{L^2}(AB)| \leq {\rm Tr}_{L^2} (|A| |B|)$ for self-adjoint opeartors $A,B$. So, applying \eqref{e:dual-ons}, we obtain \eqref{e:inhom}.
\end{proof}

Note that from Duhamel's principle the solution of the inhomogeneous equation  
\begin{equation}\label{e:inhomHartree}
  \left\{
  \begin{array}{ll}
	i\partial_t\gamma=[-\Delta,\gamma] + R(t),  \quad (x,t)\in \mathbb{T}^d \times \mathbb{R}  \\
	\gamma|_{t=0}=\gamma_0,
  \end{array} 
  \right.
\end{equation}
can be written by  
\[  
e^{it\Delta}\gamma_0e^{-it\Delta}
-i\int_0^t e^{i(t-t')\Delta}R(t')e^{i(t'-t)\Delta}\, \mathrm{d}t'.
\]
So, the inequality \eqref{e:inhom} is an estimate of the inhomogeneous term.

Remark that \eqref{e:density} and \eqref{e:inhom} can be generalize: for any $T>0$, 
\begin{equation}\label{e:density-T}
\left\|\rho_{e^{it\Delta} \langle D \rangle^{-s} \gamma_0 \langle D \rangle^{-s} e^{-it\Delta}}\right\|_{L^p_tL^q_x([0,T]\times\mathbb{T}^{d})}
\leq C_{\rho,\varepsilon} T^{1/p} \|\gamma_0\|_{\mathcal{C}^\alpha(L^2)}, 
\end{equation}
and 
\begin{equation}\label{e:inhom-T}
\|\rho_{\langle D \rangle^{-s} \gamma(t) \langle D \rangle^{-s}} \|_{L^p_tL^q_x([0,T]\times\mathbb{T}^{d})}
\leq C_{\rho,\varepsilon} T^{1/p} 
\bigg\|\int_{\mathbb{T}}e^{-is\Delta}|R(s)|e^{is\Delta}\, \mathrm{d}s\bigg\|_{\mathcal{C}^\alpha(L^2)}.
\end{equation}

Now, we prove Proposition \ref{p:generalwellposed} using Proposition \ref{p:inhomONS}. 
\begin{proof}[Proof of Proposition \ref{p:generalwellposed}]
First we prove the local well-posedness Proposition \ref{p:generalwellposed}-(1). 
Let us write $\|\gamma_0\|_{\mathcal{C}^{\alpha,s}(L^2)} = R <\infty$ and take $T=T(R, \|w\|_{B^s_{q',\infty}})\leq1$ to be chosen later. 
To capture the solution by employing the fixed point theorem, define the space $X$ by 
\begin{align*}
X_T=
\{
(\gamma,\rho)\in C^0_t([0,T];\mathcal{C}^{\alpha,s}(L^2))\times L^p_tL^q_x([0,T]\times\mathbb{T}^d): \| (\gamma, \rho) \|_{X_T} \leq C^*R\},
\end{align*}
where 
\[
\| (\gamma,\rho) \|_{X_T} := \|\gamma\|_{C^0_t([0,T];\mathcal{C}^{\alpha,s}(L^2))}+\|\rho\|_{L^p_tL^q_x([0,T]\times\mathbb{T}^d)}
\]
and $C^*$ is chosen so that $C^*>\max{(10,10C_{\rho,\varepsilon})}$. 
Next, define the contraction map $\Phi$. First, define 
\[
\Phi_1(\gamma,\rho)(t)
=
e^{it\Delta}\gamma_0e^{-it\Delta}
-i\int_0^t e^{i(t-t')\Delta}[w_a\ast \rho(t'),\gamma(t')]e^{i(t'-t)\Delta}\, \mathrm{d}t'
\] 
and 
\[
\Phi(\gamma,\rho)
=
(\Phi_1(\gamma,\rho),\rho[\Phi_1(\gamma,\rho)]). 
\]
Here, we used the notation $\rho[\gamma]=\rho_\gamma$. In this formulation, \eqref{e:Hartree} is equivalent to $(\gamma,\rho_\gamma) = \Phi(\gamma,\rho_\gamma)$. 
We now claim that for any $T>0$ and any small $\delta>0$, 
\begin{equation}\label{e:contract-1}
\|\Phi_1(\gamma,\rho)\|_{C^0_t([0,T];\mathcal{C}^{\alpha,s}(L^2))}
\leq 
R
+
C_{s,\delta}T^{1/p'}\|w\|_{{B}^{s+\delta}_{q',\infty}}(C^*R)^2
\end{equation}
and recalling $C_{\rho,\varepsilon}$ is the constant of the orthonormal Strichartz inequality \eqref{e:globalONS}, 
\begin{equation}\label{e:contract-2}
\|\rho[\Phi_1(\gamma,\rho)]\|_{L^p_tL^q_x([0,T]\times\mathbb{T}^d)}
\leq
C_{\rho,\varepsilon}T^{1/p}\big\{ R+C_{s,\delta}T^{1/p'}\|w\|_{{B}^{s+\delta}_{q',\infty}}(C^*R)^2\big\}.
\end{equation}
Once these claims are proved, then choosing $T\leq1$ small enough so that 
\[
C_{s,\delta}C_{\rho,\varepsilon}T^{1/p'}\|w\|_{{B}^{s+\delta}_{q',\infty}}(C^*R)^2\leq \frac{C^*R}{4},
\]
we see that $\Phi(\gamma,\rho)\in X_T$ for $(\gamma,\rho)\in X_T$ (precisely speaking, $T$ depends on $\|w\|_{{B}^{s+\delta}_{q',\infty}}$, not $\|w\|_{{B}^{s}_{q',\infty}}$, but this is harmless since $s = \rho + \varepsilon$ and $\varepsilon,\delta$ are arbitrary small).
Similarly, we can show that $\Phi$ is a contraction mapping. So, we find a solution to the Hartree equation \eqref{e:Hartree} on $[0,T]$. 

Let us prove \eqref{e:contract-1}. 
To evaluate $\|\Phi_1(\gamma,\rho)\|_{C^0_t([0,T];\mathcal{C}^{\alpha,s}(L^2))}$, fix any $t\in[0,T]$ and calculate 
\begin{align*}
&\|\Phi_1(\gamma,\rho)(t)\|_{\mathcal{C}^{\alpha,s}(L^2)}\\
\leq&
\|e^{it\Delta}\gamma_0e^{-it\Delta}\|_{\mathcal{C}^{\alpha,s}(L^2)}
+
\int_0^T
\big\|e^{i(t-t')\Delta}[w\ast \rho(t'),\gamma(t')]e^{i(t'-t)\Delta}\big\|_{\mathcal{C}^{\alpha,s}(L^2)}\, \mathrm{d}t'.
\end{align*}
The first term is easy to handle since if $(f_j)_j$ is orthonormal in $L^2$, then $(e^{it\Delta}f_j)_j$ is as well for each $t$:
\[
\|e^{it\Delta}\gamma_0e^{-it\Delta}\|_{\mathcal{C}^{\alpha,s}(L^2)}=
\|\gamma_0\|_{\mathcal{C}^{\alpha,s}(L^2)}= R.
\]
For the second term, 
we use the H\"{o}lder inequality for Schatten spaces 
to have 
\begin{align*}
&\big\|e^{i(t-t')\Delta}[w\ast \rho(t'),\gamma(t')]e^{i(t'-t)\Delta}\big\|_{\mathcal{C}^{\alpha,s}(L^2)}\\
\leq&
\big\{ \| \langle D \rangle^s w\ast \rho(t') \langle D \rangle^{-s} \|_{\mathcal{C}^\infty(L^2)} 
+
\| \langle D \rangle^{-s} w\ast \rho(t') \langle D \rangle^{s} \|_{\mathcal{C}^\infty(L^2)} \big\} \| \gamma(t') \|_{\mathcal{C}^{\alpha,s}(L^2)} 
\end{align*}
The estimate we employ to evaluate the above nonlinear term is the following (see Corollary on p. 205 in \cite{Triebel} where the inequality was proved for $\mathbb{R}^d$ case, but the same proof is applicable for $\mathbb{T}^d$ case)
\begin{equation}\label{eq:1012-8}
\|f\cdot g\|_{{H}^{r}}
\leq
C_{s,\delta}
\|f\|_{{B}^{|r|+\delta}_{\infty,\infty}}
\|g\|_{{H}^{r}},
\end{equation}
where $r\in\mathbb{R}$ and $\delta>0$ are arbitrary.
From this estimate and Young's inequality, 
\begin{align*}
\| \langle D \rangle^s w\ast \rho(t') \langle D \rangle^{-s} \|_{\mathcal{C}^\infty(L^2)}  
\leq 
C_{s,\delta}
\|w\ast \rho(t')\|_{{B}^{s+\delta}_{\infty,\infty}}
\leq
C_{s,\delta}
\|w\|_{{B}^{s+\delta}_{q',\infty}}\|\rho(t')\|_{L^q_x}.
\end{align*}
Similarly, 
\begin{align*}
\| \langle D \rangle^{-s} w\ast \rho(t') \langle D \rangle^{s} \|_{\mathcal{C}^\infty(L^2)}  
\leq
C_{-s,\delta}
\|w\|_{{B}^{s+\delta}_{q',\infty}}\|\rho(t')\|_{L^q_x}.
\end{align*}
In total, from $(\gamma,\rho)\in X_T$, we estimate the second term by 
\begin{align*}
\int_0^T
\left\|e^{i(t-t')\Delta}[w\ast \rho(t'),\gamma(t')]e^{i(t'-t)\Delta}\right\|_{\mathcal{C}^{\alpha,s}(L^2)}\, \mathrm{d}t'
\leq
C_{s,\delta}'\|w\|_{{B}^{s+\delta}_{q',\infty}}T^{1/p'}(C^{*}R)^2.
\end{align*}
where $C_{s,\delta}'=C_{s,\delta} + C_{-s,\delta}$ which shows \eqref{e:contract-1}. 

To show \eqref{e:contract-2}, we employ homogeneous and inhomogeneous orthonormal Strichartz estimates \eqref{e:density-T} and \eqref{e:inhom-T} to have 
\begin{align*}
&T^{-1/p}C_{\rho,\varepsilon}^{-1}  \|\rho[\Phi_1(\gamma,\rho)]\|_{L^p_tL^q_x([0,T]\times\mathbb{T}^d)}\\
\leq&
\| \langle D \rangle^s \gamma_0 \langle D \rangle^s \|_{\mathcal{C}^\alpha(L^2)}
+
\bigg\|\int_{0}^Te^{-it'\Delta} \langle D \rangle^s |[w_a\ast \rho(t'),\gamma(t')]| \langle D \rangle^s e^{it'\Delta}\, \mathrm{d}t'\bigg\|_{\mathcal{C}^\alpha(L^2)}.
\end{align*}
For the first term, $\| \langle D \rangle^s \gamma_0 \langle D \rangle^s \|_{\mathcal{C}^\alpha(L^2)} = R$. For the second term, we may employ the same argument as \eqref{e:contract-1} and we see \eqref{e:contract-2}.

Let us show proposition \ref{p:generalwellposed}-(2). In this case, we first fix an arbitrary $T>0$. The key estimates are \eqref{e:contract-1} and \eqref{e:contract-2} which have been already proved. These two estimates yield that 
\[
\| \Phi (\gamma,\rho) \|_{X_T} \leq (1+C_{\rho,\varepsilon}T^{1/p}) \big( \| \gamma_0 \|_{\mathcal{C}^{\alpha,s}(L^2)} + C_{s,\delta} T^{1/p'}\|w\|_{B^{s+\delta}_{q',\infty}} \| (\gamma, \rho) \|_{X_T}^2  \big). 
\] 
With this in mind, we choose $R_T = R_T(\| w \|_{B^s_{q',\infty}})$ small enough (precisely speaking, $R_T$ depends on $\| w \|_{B^{s+\delta}_{q',\infty}}$, not $\| w \|_{B^s_{q',\infty}}$, but again this is harmless) so that we can find $M>0$ such that for any $y\in [0, M]$, it holds 
\[ 
(1+C_{\rho,\varepsilon}T^{1/p}) \big( \| \gamma_0 \|_{\mathcal{C}^{\alpha,s}(L^2)} + C_{s,\delta} T^{1/p'} \|w\|_{B^{s+\delta}_{q',\infty}} y^2  \big) \leq M  
\]
as long as $ \| \gamma_0 \|_{\mathcal{C}^{\alpha,s}(L^2)} \leq R_T$. So, if we define the space $X_{T,M}$ by 
\[ 
X_{T,M} := \{ (\gamma,\rho)\in X_T: \| (\gamma, \rho) \|_{X_T} \leq M \},
\]
then we see that $\Phi : X_{T,M} \to X_{T,M}$. By choosing $R_T$ smaller further, we can also show that $\Phi$ is a contraction map on $X_{T,M}$ by the similar way and hence from the fixed point theorem we find a solution $\gamma \in C^0_t([0,T]; \mathcal{C}^{\alpha,s}(L^2))$ satisfying $\rho_\gamma \in L^p_tL^q_x([0,T]\times\mathbb{T}^d)$.
\end{proof}

\section{On the beyond region $[A,C]$}
In this final Section, we give one observation on the beyond region $[A,C]$ when $d\geq2$ and this at least gives almost sharp inequality with $\varepsilon$-loss at the point $A$.

\begin{theorem}\label{t:ONS-beyond}
Let $d\geq2$, $N>1$ and $(a_j)_j$ be any orthonormal system in $\ell^2$.
\begin{enumerate}
\item\label{item:1}
On $(\frac1q,\frac1p)=A$, 
\begin{equation}\label{e:ONS-A}
\bigg\| \sum_j \lambda_j |\mathcal{E}_Na_j|^2  \bigg\|_{L^p_tL^q_x(\mathbb{T}^{d+1})} \leq C_\varepsilon N^{\frac1p + \varepsilon} \|\lambda\|_{\ell^{\alpha(1/p)}}  
\end{equation}
holds true for any $\lambda\in\ell^{\alpha(1/p)}$ and arbitrary small $\varepsilon>0$. 
Moreover, this is sharp up to $\varepsilon$. 
\item\label{item:2}
On $(\frac1q,\frac1p)=C$, 
\begin{equation}\label{e:ONS-C}
\bigg\| \sum_j \lambda_j |\mathcal{E}_Na_j|^2  \bigg\|_{L^p_tL^q_x(\mathbb{T}^{d+1})} \leq C_\varepsilon N^{\frac1p + \frac1d + \varepsilon} \|\lambda\|_{\ell^{\alpha(1/p)}}  
\end{equation}
holds true for any $\lambda\in\ell^{\alpha(1/p)}$ and arbitrary small $\varepsilon>0$. 
\end{enumerate}
\end{theorem}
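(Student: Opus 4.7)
My plan is to apply the duality principle (Lemma \ref{l:duality}) to recast both parts as Schatten-class estimates on $W_1\mathcal{E}_N\mathcal{E}_N^*W_2$, and derive these via complex interpolation between a Hilbert-Schmidt ($\mathcal{C}^2$) endpoint and the trivial $\mathcal{C}^\infty$ operator-norm endpoint, following the same spirit as the proof of Theorem \ref{t:ONS-mix}. The new element is the $\mathcal{C}^2$-endpoint, which uses the global-in-time Bourgain-Demeter Strichartz inequality \eqref{e:strichartz-tori} applied to the Dirichlet kernel $K_N=\mathcal{E}_N\1_{S_{d,N}}$ (so that $\|K_N\|_{L^{2p_*}_{x,t}}^2\lesssim N^{d+\varepsilon}$) in place of the short-time dispersive estimate of Lemma \ref{l:dispersive}; the $N^\varepsilon$-loss appearing in Theorem \ref{t:ONS-beyond} is inherited from this global ingredient.

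For Part (1), the dual formulation at $A$ reads $\|W_1\mathcal{E}_N\mathcal{E}_N^*W_2\|_{\mathcal{C}^{d+1}}\leq C_\varepsilon N^{d/(d+1)+\varepsilon}\|W_1\|_{L^{2(d+1)}_tL^{d+1}_x}\|W_2\|_{L^{2(d+1)}_tL^{d+1}_x}$. I would begin by expanding the Hilbert-Schmidt square as $\|W_1\mathcal{E}_N\mathcal{E}_N^*W_2\|_{\mathcal{C}^2}^2=\iint|W_1(y_1)|^2|K_N(y_1-y_2)|^2|W_2(y_2)|^2\,\mathrm{d}y_1\mathrm{d}y_2$, taking the partial spatial Fourier transform of $|K_N|^2$ (whose coefficient $\widehat{|K_N|^2}_x(m,s)$ is an exponential sum over $n_2\in S_{d,N}$ with $n_2+m\in S_{d,N}$), and combining Cauchy-Schwarz in $m$ with the Bourgain-Demeter bound to produce a $\mathcal{C}^2$-estimate with $L^4_tL^2_x$-norms on $W_i$ and loss $N^{d/2+\varepsilon}$. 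Complex interpolation against the trivial $\mathcal{C}^\infty$-endpoint at $\theta=2/(d+1)$ then yields the claim at $\mathcal{C}^{d+1}$ with the required mixed norms. The sharpness up to $\varepsilon$ is verified by a Knapp-type example analogous to Lemma \ref{l:necessary}.

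For Part (2), the dual formulation at $C$ reads $\|W_1\mathcal{E}_N\mathcal{E}_N^*W_2\|_{\mathcal{C}^d}\leq C_\varepsilon N^{1+1/d+\varepsilon}\|W_1\|_{L^\infty_tL^d_x}\|W_2\|_{L^\infty_tL^d_x}$, which I would approach by the same interpolation scheme with a different $\mathcal{C}^2$-endpoint at $L^\infty_tL^2_x$-norms on $W_i$: Cauchy-Schwarz in the spatial variables yields $\iint|W_1|^2|K_N|^2|W_2|^2\lesssim \|K_N\|_{L^2_tL^\infty_x}^2\|W_1\|_{L^\infty_tL^2_x}^2\|W_2\|_{L^\infty_tL^2_x}^2$, with $\|K_N\|_{L^2_tL^\infty_x}^2\lesssim N^{d+1+\varepsilon}$ obtained from the Bourgain-Demeter bound combined with spatial Bernstein $\|\cdot\|_{L^\infty_x}\lesssim N^{d/(2p_*)}\|\cdot\|_{L^{2p_*}_x}$. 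Stein interpolation at $\theta=2/d$ against $\mathcal{C}^\infty$ then gives the desired estimate, with the extra $N^{1/d}$-loss compared to Part (1) arising precisely from the spatial Bernstein step.

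The main obstacle I foresee is producing the $\mathcal{C}^2$-endpoint at $A$ with exactly the mixed norms $L^4_tL^2_x$. The Bourgain-Demeter bound on $K_N$ naturally outputs isotropic $L^{2p_*}_{x,t}$ estimates, and Cauchy-Schwarz in the spatial Fourier variable couples the $x$- and $t$-integrations; the required anisotropy at $A$ must be recovered by separately treating the $m=0$ Fourier mode of $|K_N|^2$ (which contributes $N^d$ but with the correct $L^4_tL^2_x$-norm on $W_i$ via the identity $\widehat{|W_i|^2}_x(0,t)=\|W_i(\cdot,t)\|_{L^2_x}^2$) and the $m\neq 0$ modes (which contribute only $N^{d-1/2}$ by a Gauss-sum cancellation based on counting $\{n_2\in S_{d,N}:n_2\cdot m=\text{const}\}$ on hyperplanes, albeit in the stronger $L^4_{x,t}$-norm), and then combining them using the inclusion $\|W_i\|_{L^4_tL^2_x}\leq\|W_i\|_{L^4_{x,t}}$ valid on the bounded domain $\mathbb{T}^{d+1}$.
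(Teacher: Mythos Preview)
Your overall strategy differs from the paper's, and there is a genuine gap: the $\mathcal{C}^2$ endpoints you propose are \emph{false} for $d$ large.

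For Part~(1), take $W_1=W_2=c_dN^{d/2}\1_{\{|x|\le (CN)^{-1}\}}$ (constant in $t$, $C$ a large dimensional constant), so that $\|W_i\|_{L^4_tL^2_x}\sim1$. Since $\int_{\mathbb T}|K_N(y,s)|^2\,\mathrm ds=\sum_{|n_1|^2=|n_2|^2}e^{2\pi iy\cdot(n_1-n_2)}$ and the phases are $\le\tfrac12$ for $|y|\le 2(CN)^{-1}$, one gets
\[
\|W_1\mathcal{E}_N\mathcal{E}_N^*W_2\|_{\mathcal{C}^2}^2 \;\gtrsim\; \#\{(n_1,n_2)\in S_{d,N}^2:|n_1|^2=|n_2|^2\}\;\ge\;\frac{(\#S_{d,N})^2}{\#\{|n|^2:n\in S_{d,N}\}}\;\gtrsim\;N^{2d-2},
\]
which exceeds $N^{d+\varepsilon}$ for every $d\ge3$. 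So the bound $\|W_1\mathcal{E}_N\mathcal{E}_N^*W_2\|_{\mathcal{C}^2}\lesssim N^{d/2+\varepsilon}\|W_1\|_{L^4_tL^2_x}\|W_2\|_{L^4_tL^2_x}$ cannot hold. (Separately, the inclusion $\|W_i\|_{L^4_tL^2_x}\le\|W_i\|_{L^4_{x,t}}$ goes the wrong way: an upper bound in the stronger $L^4_{x,t}$ norm does not imply one in the weaker $L^4_tL^2_x$ norm.) For Part~(2), already the contribution from $|t|\lesssim N^{-2}$, where $|K_N(0,t)|\gtrsim N^d$, gives $\|K_N\|_{L^2_tL^\infty_x}^2\gtrsim N^{2d-2}$, exceeding $N^{d+1+\varepsilon}$ for $d\ge4$; your Bourgain--Demeter + Bernstein derivation indeed only yields $N^{2d(d+1)/(d+2)+\varepsilon}$, matching $N^{d+1}$ only at $d=2$.

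The paper circumvents this by first localizing in time to $I_N=[-\tfrac{1}{2N},\tfrac{1}{2N}]$ (the $N^{1/p}$ then comes from summing $\sim N$ such intervals, as in Theorem~\ref{t:ONS-mix}), and then decomposing $\mathcal{E}_N\mathcal{E}_N^*$ dyadically in $|t-t'|\sim2^j\le N^{-1}$. On each dyadic piece one interpolates between a $\mathcal{C}^2$ bound coming from the short-time pointwise kernel estimate $|K_N(x,t)|\lesssim\min(|t|^{-d/2},N^d)\le |t|^{-\frac d2(1-\mu)}N^{d\mu}$ and a $\mathcal{C}^\infty$ bound coming from $|\widehat{K_{N,j}}|\lesssim 2^j$; summing over $j$ forces $\mu>\frac{d+1-\alpha}{d}$, and taking $\mu$ close to this threshold produces the $\varepsilon$-loss. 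The point is that a single global $\mathcal{C}^2$ endpoint cannot absorb the $t\to0$ singularity of $K_N$; the dyadic decomposition with the parameter $\mu$ is what balances kernel size against interval length.
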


\begin{remark}
We will show Theorem \ref{t:ONS-beyond} in a more general form: for any $(\frac1q,\frac1p)\in[A,C]$, 
\begin{equation}\label{e:beyond}
\bigg\| \sum_j \lambda_j |\mathcal{E}_N a_j|^2  \bigg\|_{L^p_tL^q_x(\mathbb{T}^{d+1})} \leq C_\varepsilon N^{\frac12(d-1-\frac{d+1}{q}) + \frac1p + \varepsilon} \|\lambda\|_{\ell^\frac{2q}{q+1}}. 
\end{equation}
Note that while \eqref{e:ONS-A} gives an almost sharp estimate at $A$ up to $\varepsilon$, \eqref{e:ONS-C} seems not sharp because of the factor $N^\frac{1}{d}$. 
\end{remark}

\begin{proof}
If we recall the argument which we used to prove Theorem \ref{t:ONS-mix}, then it suffices to show 
\[
\bigg\| \sum_j \lambda_j |\mathcal{E}_N a_j|^2  \bigg\|_{L^p_tL^q_x(\mathbb{T}^{d}\times I_N)} \leq C_\varepsilon N^{\frac12(d-1-\frac{d+1}{q})  + \varepsilon} \|\lambda\|_{\ell^\frac{2q}{q+1}}.
\]
Moreover, in view of Lemma \ref{l:duality}, this inequality follows from  
\begin{equation}\label{e:dual-beyond}
\big\| W_1^N \mathcal{E}_N \mathcal{E}_N^* W_2^N \big\|_{\mathcal{C}^\alpha(L^2(\mathbb{T}^{d+1}))} \lesssim N^{\frac{d+1-\alpha}{\alpha}+\varepsilon} \|W_1\|_{L^\beta_t L^\alpha_x(\mathbb{T}^{d+1})} \|W_1\|_{L^\beta_t L^\alpha_x(\mathbb{T}^{d+1})}
\end{equation}
for $\frac 2\beta +\frac d\alpha =1$ and $d\leq \alpha \leq d+1$ where $W_i^N:= \1_{I_N}(t) W_i $. 
To this end, we decompose the operator $\mathcal{E}_N\mathcal{E}_N^* = K_N \ast$ as follows: for $(x,t)\in\mathbb{T}^d\times I_N$, 
\begin{align*}
\mathcal{E}_N\mathcal{E}_N^* W^N(x,t) 
&= \sum_{j=-\infty}^{{\rm log}_2 (N^{-1})}  \int_{\mathbb{T}^d}\int_{2^{j-1} \leq |t-t'| < 2^{j}}  K_N(x-x',t-t') W(x',t')\, \mathrm{d}x'\mathrm{d}t' \\
&= \sum_{j=-\infty}^{{\rm log}_2 (N^{-1})} T_{N,j} W(x,t), 
\end{align*}
where $T_{N,j}=K_{N,j}\ast$ and $K_{N,j}=K_N\1_{2^{j-1}\leq |t| <2^j}$.
Hereafter we evaluate each term $\big\| W_1^N T_{N,j} W_2^N \big\|_{\mathcal{C}^\alpha(L^2(\mathbb{T}^{d+1}))}$ .  
We claim that for any $\sigma\in [2,\infty]$ and any parameters $\mu\in[0,1]$, $\rho\geq4$, 
\begin{align} \label{e:local-dual}
&\big\| W_1^N T_{N,j} W_2^N \big\|_{\mathcal{C}^\alpha(L^2(\mathbb{T}^{d+1}))} \nonumber \\
\lesssim & 2^{j[(\frac12-\frac d2 (1-\mu))\frac 2\alpha+1-\frac 2\alpha]} N^{(d\mu - 2(\frac14-\frac1\rho))\frac 2\alpha} \| W_1 \|_{L^{\frac{\rho\alpha}{2}}_tL^\alpha_x(\mathbb{T}^{d+1})} \| W_2 \|_{L^{\frac{\rho\alpha}{2}}_tL^\alpha_x(\mathbb{T}^{d+1})}
\end{align}
To see this, we consider two cases $\alpha=2$ and $\alpha=\infty$. 

When $\alpha=2$, we employ the kernel estimate: for $(x,t) \in \mathbb{T}^d\times I_{N}$, 
\[
|K_{N,j}(x,t)| \lesssim \min{(|t|^{-\frac{d}{2}}, N^d)} \leq |t|^{-\frac d2 (1-\mu)} N^{d\mu} \, \quad (\mu\in [0,1]).
\]
From this estimate, Young's inequality and H\"{o}lder's inequality, 
\begin{align*}
&\big\| W_1^N T_{N,j} W_2^N \big\|_{\mathcal{C}^2(L^2(\mathbb{T}^{d+1}))}^2 \\
\lesssim& 
N^{2d\mu} 
\int_{|t-t'|\sim 2^j} \| W^N_1(\cdot,t) \|_{L^2_x(\mathbb{T}^d)}^2 |t-t'|^{-d(1-\mu)} \|W^N_2(\cdot,t') \|_{L^2_x(\mathbb{T}^d)}^2\, \mathrm{d}t\mathrm{d}t'\\ 
\lesssim&
N^{2d\mu} 
2^{j(1-d(1-\mu))} N^{ -4(\frac14-\frac1\rho) } \|W_1\|_{L^\rho_tL^2_x(\mathbb{T}^{d+1})}^2  \|W_2\|_{L^\rho_tL^2_x(\mathbb{T}^{d+1})}^2 
\end{align*}
holds for any $\rho\geq4$. 

On the other hand, when $\alpha=\infty$, we see from Plancherel's theorem that for any $F\in L^2(\mathbb{T}^{d+1})$
\begin{align*}
\big\| W_1^N T_{N,j} [W_2^N F] \big\|_{L^2(\mathbb{T}^{d+1}))}
&\lesssim  
2^j
\|W_1\|_{L^\infty_tL^\infty_x(\mathbb{T}^{d+1})} \|   W_2 \|_{L^\infty_tL^\infty_x(\mathbb{T}^{d+1})}  \| F \|_{L^2(\mathbb{T}^{d+1})},  
\end{align*}
since we have for any $(n,n_{d+1}) \in \mathbb{Z}^{d+1}$, 
\[ |\mathcal{F}_{x,t} K_{N,j}(n,n_{d+1})|  \lesssim 2^j.  \]

Interpolating these two estimates, we obtain \eqref{e:local-dual}. 
To sum up each estimate \eqref{e:local-dual}, we need to 
\[ (\frac12-\frac d2 (1-\mu))\frac 2\alpha+1-\frac 2\alpha >0 \] 
or equivalently, 
$ \mu>\frac{d+1-\alpha}{d} $ which gives the restriction of $\mu$. 
Under this restriction, we can sum up \eqref{e:local-dual} and obtain 
\begin{align*}
&\big\| W_1^N \mathcal{E}_N \mathcal{E}_N^* W_2^N \big\|_{\mathcal{C}^\alpha(L^2(\mathbb{T}^{d+1}))} \\
\lesssim &
N^{(d\mu - 2(\frac14-\frac1\rho))\frac 2\alpha} N^{-(1-\frac d\alpha(1-\theta) -\frac1\alpha)} \| W_1 \|_{L^{\frac{\rho\alpha}{2}}_tL^\alpha_x(\mathbb{T}^{d+1})} \| W_2 \|_{L^{\frac{\rho\alpha}{2}}_tL^\alpha_x(\mathbb{T}^{d+1})}. 
\end{align*}
The parameter $\rho\geq4$ is determined to establish the scaling condition $ 2\cdot\frac{2}{\rho\alpha} + \frac d\alpha =1$ which means $\frac1\rho = \frac{\alpha - d}{4}$. 
From this and a few computations we learn $\alpha$ is restriced to $d\leq\alpha\leq d+1$.  
Then we finally have 
\[
\big\| W_1^N \mathcal{E}_N \mathcal{E}_N^* W_2^N \big\|_{\mathcal{C}^\alpha(L^2(\mathbb{T}^{d+1}))}  
\lesssim 
N^{\frac d\alpha \mu} \| W_1 \|_{\beta,\alpha} \| W_2 \|_{\beta,\alpha}, 
\]
for any $\alpha \in [d,d+1]$, $\mu \in (\frac{d+1-\alpha}{d},1]$ and $\frac 2\beta +\frac d\alpha =1$. 
In particular, taking $\mu = \frac{d+1-\alpha}{d} + \varepsilon$, we arrive at \eqref{e:dual-beyond}. 
\end{proof}

\begin{acknowledgements}
This work was supported by Grant-in-Aid for JSPS Research Fellow no. 17J01766. This work grows out the collaboration with Professors Neal Bez, Younghun Hong, Sanghyuk Lee and Yoshihiro Sawano \cite{BHLNS}. The author thank Neal Bez for introducing me to this problem, Sanghyuk Lee for sharing very useful insight, Younghun Hong for giving me many comments from view point of PDE perspective and Yoshihiro Sawano for making the paper nicer. 
\end{acknowledgements}

\end{document}